\numberwithin{equation}{section}
\theoremstyle{plain}
\newtheorem{theorem}{Theorem}[section]
\newtheorem{Def}[theorem]{Definition}
\newtheorem{lemma}[theorem]{Lemma}
\newtheorem{corollary}[theorem]{Corollary}
\theoremstyle{definition}
\newtheorem{definition}[theorem]{Definition}
\theoremstyle{remark}
\newtheorem{case[theorem]}{Case}
\def \R{{\mathbb R}}
\def \C{{\mathbb C}}
\def\supp{\hbox{supp\,}}
\def\norm#1.#2.{\lVert#1\rVert_{#2}}
\def\R{\mathbb R}
\def \H{{\mathcal H}}
\title[Hausdorff operators associated with the Opdam--Cherednik transform]{Hausdorff operators associated with the Opdam--Cherednik transform in Lebesgue spaces
}
\author{Shyam Swarup Mondal} 
\author{Anirudha Poria}
\thanks{Research supported by ERC Starting Grant No. 713927.}
\address{ \endgraf Department of Mathematics, Indian Institute of Technology Guwahati, Guwahati 781039, India} 
\email{mondalshyam055@gmail.com}
\address{Department of Mathematics, Bar-Ilan University, Ramat-Gan 5290002, Israel}
\email{anirudhamath@gmail.com} 
\keywords{Hausdorff operator;  Opdam–Cherednik transform;   Lebesgue spaces;  grand Lebesgue spaces; quasi-Banach spaces}
\subjclass[2010]{Primary 47G10; Secondary 44A15, 46E30, 43A32.}
\date{\today}
\begin{document}
\maketitle
\begin{abstract} 
In this paper, we introduce the Hausdorff operator associated with the Opdam--Cherednik transform and study the boundedness of this operator in various Lebesgue spaces. In particular, we prove the boundedness of the Hausdorff operator in Lebesgue spaces, in grand Lebesgue spaces, and in quasi-Banach spaces that are associated with the Opdam--Cherednik transform.    Also, we give necessary and sufficient conditions for the boundedness of the Hausdorff operator in these spaces. 
\end{abstract} 

\section{Introduction}
One of the most important operators in harmonic analysis is the Hausdorff operator, and it is extremely useful in solving certain classical problems in analysis. This operator originated from some classical summation methods and the Markov moment problem. The Hausdorff operator is deeply rooted in the study of one-dimensional Fourier analysis and has become an essential part of modern harmonic analysis. To study the summability of number series, Hausdorff in \cite{haus} introduced this operator. Then the theory on this operator developed in various directions, for instance, the Hausdorff summability of Fourier series and Hausdorff mean of Fourier--Stieltjes transforms (see \cite{hardy,geor}). To discuss the importance of the Hausdorff operator in more detail, we begin with recalling the definition of this operator.  Let $\phi$ be a locally integrable function on the half-line $(0,\infty)$, then the Hausdorff operator  $H_{\phi}$ on $ \R$ is defined by 
$$
H_{\phi}(f)(x)=\int_{0}^{\infty} \frac{\phi(t)}{t} f\left(\frac{x}{t}\right) \;d t.
$$
By choosing the kernel function $\phi$ appropriately, one can get many classical operators in analysis as a special case of the Hausdorff operator such as the Cesàro operator, Hardy operator, adjoint Hardy operator, Hardy--Littlewood--Pólya operator, Riemann--Liouville fractional integral operator, and many other well-known operators (see \cite{graffa, andersen, christ, lifly09, lifly2019, miyachi}). For a detailed study on the historical development, background, and applications of the Hausdorff operator, we refer to the excellent survey articles of Liflyand  \cite{lif}  and Chen et al. \cite{chen}.

Considerable attention has been devoted to study the basic properties of the Hausdorff operator in various settings. In particular, the boundedness of this operator in different spaces was extensively investigated by many authors. For example, the boundedness of the Hausdorff operator was obtained  in Lebesgue spaces (see \cite{weight, ban20, ban21, chen, 
jain21, liflya19}), in the one-dimensional Hardy space  $H^1(\R)$ (see \cite{ lanker,lif2}), in the Hardy space $H^1(\R^n), n\geq 2$ (see \cite{chen2014, mort, weisez}), and in other function spaces (see \cite{ gao,lif6}). Further, the Hausdorff operator was studied on the Heisenberg group in \cite{fuu}, and weighted Herz space estimates for this operator on the Heisenberg group were obtained in \cite{raun3}. 
Recently, Daher and Saadi in \cite{daher2020,daher2021}   studied the boundedness of the Dunkl--Hausdorff operator in Lebesgue spaces and in the real Hardy space. Motivated by the recent developments of Hausdorff operators and to discovering generalizations for this operator to new contexts, in this paper, we introduce the Hausdorff operator associated with the Opdam--Cherednik transform and study the boundedness of this operator in different Lebesgue spaces.

The motivation to study the Hausdorff operator associated with the Opdam--Cherednik transform in various Lebesgue spaces arises from the Hausdorff operator for the Dunkl transform on function spaces. 
In the setting of  this  transform, we aim to study some basic properties of the Hausdorff operator in Lebesgue spaces.
 The Opdam--Cherednik transform has a significant contribution to harmonic analysis (see \cite{and15,opd95, opd00,sch08}).  An important motivation to study the Jacobi--Cherednik operator arises from their relevance in the algebraic description of exactly solvable quantum many-body systems of Calogero--Moser--Sutherland type (see \cite{die00,hik96}) and they play a crucial role in the study of special functions with root systems (see \cite{dun92,hec91}).  These describe algebraically integrable systems in one dimension and have gained considerable interest in mathematical physics.  A detailed study on the development and applications of the Jacobi--Cherednik operator and Opdam--Cherednik transform can be found in \cite{sch08, ank12,hec91,opd95,trim}. For some recent works on the Opdam--Cherednik transform, we refer to \cite{and15,jahn16, por21, shyam1}.

The purpose of this paper is to study the boundedness of the Hausdorff operator in various Lebesgue spaces associated with the Opdam--Cherednik transform. Mainly, we prove that this operator is bounded in Lebesgue spaces $L^p(\R, A_{\alpha, \beta})$, in grand Lebesgue spaces, and in quasi-Banach spaces. Also, we obtain necessary and sufficient conditions for the boundedness of the Hausdorff operator in these spaces. The motivation and main idea to study the Hausdorff operator in various Lebesgue spaces come from \cite{weight}, where the authors studied the boundedness of the Hausdorff operator in various Lebesgue spaces. Here, we prove that the Hausdorff operator is bounded in various Lebesgue spaces associated with the Opdam--Cherednik transform. The proofs of these results are based on techniques used in \cite{weight}.

The remainder of this paper is structured as follows. In Section \ref{sec2}, we present some preliminaries related to the Opdam--Cherednik transform.  In Section \ref{sec3}, we introduce and study the Hausdorff operator associated with the Opdam--Cherednik transform in different Lebesgue spaces. First, we show that, like in the case of the Fourier transform, the Hausdorff operator satisfies a similar relation for the Opdam--Cherednik transform. Then, we study the boundedness of the Hausdorff operator in Lebesgue spaces $L^p(\R, A_{\alpha, \beta})$, and provide necessary and sufficient conditions for the $L^p(\R, A_{\alpha, \beta})$-boundedness of this operator. Also, we prove  the boundedness of the Hausdorff operator in grand Lebesgue spaces and in quasi-Banach spaces associated with the Opdam--Cherednik transform.  Further, we  give necessary and sufficient  conditions for the boundedness of the Hausdorff operator in these spaces.

\section{Harmonic analysis and the Opdam--Cherednik transform}\label{sec2}
In this section, we recall some necessary definitions and results related to the Opdam--Cherednik transform. For a detailed study on harmonic analysis related to this transform, one can look at  \cite{ opd95, sch08, ank12}.
 Here, we mainly adopt the notation and terminology given in \cite{por21}.

Let $T_{\alpha, \beta}$ denote the Jacobi--Cherednik differential--difference operator (also called the Dunkl--Cherednik operator)
\[T_{\alpha, \beta} f(x)=\frac{d}{dx} f(x)+ \Big[ 
(2\alpha + 1) \coth x + (2\beta + 1) \tanh x \Big] \frac{f(x)-f(-x)}{2} - \rho f(-x), \]
where $\alpha, \beta$ are two parameters satisfying $\alpha \geq \beta \geq -\frac{1}{2}$ and $\alpha > -\frac{1}{2}$, and $\rho= \alpha + \beta + 1$. Let $\lambda \in \C$. The Opdam--Cherednik hypergeometric functions $G^{\alpha, \beta}_\lambda$ on $\R$ are eigenfunctions $T_{\alpha, \beta} G^{\alpha, \beta}_\lambda(x)=i \lambda  G^{\alpha, \beta}_\lambda(x)$ of $T_{\alpha, \beta}$ that are normalized such that $G^{\alpha, \beta}_\lambda(0)=1$. The eigenfunction $G^{\alpha, \beta}_\lambda$ is given by
\[G^{\alpha, \beta}_\lambda (x)= \varphi^{\alpha, \beta}_\lambda (x) - \frac{1}{\rho - i \lambda} \frac{d}{dx}\varphi^{\alpha, \beta}_\lambda (x)=\varphi^{\alpha, \beta}_\lambda (x)+ \frac{\rho+i \lambda}{4(\alpha+1)} \sinh 2x \; \varphi^{\alpha+1, \beta+1}_\lambda (x),  \]
where $\varphi^{\alpha, \beta}_\lambda (x)={}_2F_1 \left(\frac{\rho+i \lambda}{2}, \frac{\rho-i \lambda}{2} ; \alpha+1; -\sinh^2 x \right) $ is the classical Jacobi function.

\noindent For every $ \lambda \in \C$ and $x \in  \R$, the eigenfunction
$G^{\alpha, \beta}_\lambda$ satisfies  
\[ |G^{\alpha, \beta}_\lambda(x)| \leq C \; e^{-\rho |x|} e^{|\text{Im} (\lambda)| |x|},\] 
where $C$ is a positive constant. Since $\rho > 0$, we have
\begin{equation*}
	|G^{\alpha, \beta}_\lambda(x)| \leq C \; e^{|\text{Im} (\lambda)| |x|}. 
\end{equation*}
The Heckman--Opdam hypergeometric functions $F_{\lambda}^{\alpha,\beta}$ satisfy 
$
F_{\lambda}^{\alpha,\beta}(tx)=F_{ \lambda t}^{\alpha,\beta}(x)
,$ for every $x, t \in \R$ (see  \cite{Heckman}).
Since the    Heckman--Opdam   and  Opdam--Cherednik  hypergeometric functions    are related to each other (see\cite{jahn16, sch08,opd95}),   
  the hypergeometric functions $G^{\alpha, \beta}_\lambda$  satisfy the following relation
\begin{align}\label{eq22}
	G^{\alpha, \beta}_{\lambda }(tx)=G^{\alpha, \beta}_{\lambda  t}(x),
\end{align}
 for every $ \lambda \in \C $ and $x, t \in  \R$. For a more detailed study on these hypergeometric functions, we refer to  \cite{Heckman,opd95}.
 
Let us denote by $C_c (\R)$ the space of continuous functions on $\R$ with compact support. The Opdam--Cherednik transform is the Fourier transform in the trigonometric Dunkl setting, and it is defined as follows.
\begin{Def}\label{eq12}
	Let $\alpha \geq \beta \geq -\frac{1}{2}$ with $\alpha > -\frac{1}{2}$. The Opdam--Cherednik transform $\mathcal{H}_{\alpha, \beta} (f)$ of a function $f \in C_c(\R)$ is defined by
	\[ \H_{\alpha, \beta} (f) (\lambda)=\int_{\R} f(x)\; G^{\alpha, \beta}_\lambda(-x)\; A_{\alpha, \beta} (x) dx \quad \text{for all } \lambda \in \C, \] 
	where $A_{\alpha, \beta} (x)= (\sinh |x| )^{2 \alpha+1} (\cosh |x| )^{2 \beta+1}$. The inverse Opdam--Cherednik transform for a suitable function $g$ on $\R$ is given by
	\[ \H_{\alpha, \beta}^{-1} (g) (x)= \int_{\R} g(\lambda)\; G^{\alpha, \beta}_\lambda(x)\; d\sigma_{\alpha, \beta}(\lambda) \quad \text{for all } x \in \R, \]
	where $$d\sigma_{\alpha, \beta}(\lambda)= \left(1- \dfrac{\rho}{i \lambda} \right) \dfrac{d \lambda}{8 \pi |C_{\alpha, \beta}(\lambda)|^2}$$ and 
	$$C_{\alpha, \beta}(\lambda)= \dfrac{2^{\rho - i \lambda} \Gamma(\alpha+1) \Gamma(i \lambda)}{\Gamma \left(\frac{\rho + i \lambda}{2}\right)\; \Gamma\left(\frac{\alpha - \beta+1+i \lambda}{2}\right)}, \quad \lambda \in \C \setminus i \mathbb{N}.$$
\end{Def}

The Plancherel formula is given by 
\begin{equation}\label{eq03}
	\int_{\R} |f(x)|^2 A_{\alpha, \beta}(x) dx=\int_\R \H_{\alpha, \beta} (f)(\lambda) \overline{\H_{\alpha, \beta} ( \check{f})(-\lambda)} \; d \sigma_{\alpha, \beta} (\lambda),
\end{equation}
where $\check{f}(x):=f(-x)$.

Let $L^p(\R,A_{\alpha, \beta} )$ (resp. $L^p(\R, \sigma_{\alpha, \beta} )$), $p \in [1, \infty] $, denote the $L^p$-spaces corresponding to the measure $A_{\alpha, \beta}(x) dx$ (resp. $d | \sigma_{\alpha, \beta} |(x)$). We refer to \cite{ank12, hec91, opd95, trim,  opd00} for further properties and results related to  the Opdam--Cherednik transform.
 
\section{Main results}\label{sec3}
In this section, we define the Hausdorff operator associated with the Opdam--Cherednik transform, and study the boundedness of this operator in different Lebesgue spaces. Here, we consider various Lebesgue spaces associated with the Opdam--Cherednik transform. We begin with the definition of the Hausdorff operator.
\begin{definition}
	Let $\phi$ be a non-negative function  defined on $(0,\infty)$ and $\phi \in L^{1} (0, \infty)$, then the Hausdorff operator $H_{\alpha, \beta, \phi}$ acting on $L^1(\R, A_{\alpha, \beta})$  generated by the function $\phi$, is defined by
	\begin{align}\label{eq11}
		H_{\alpha, \beta, \phi}(f)(x)=\int_{0}^{\infty} \frac{\phi(t)}{t} f\left(\frac{x}{t}\right) \frac{A_{\alpha, \beta}(\frac{x}{t})}{A_{\alpha, \beta}(x)}\; dt, \quad x\in \R.
	\end{align}
\end{definition}

Next, we provide some examples of the Hausdorff operator associated with the Opdam--Cherednik transform. By choosing the function $\phi$ appropriately, we can get many classical operators associated with the Opdam--Cherednik transform as special cases of the Hausdorff operator. For example:

\begin{enumerate}

\item  if $\phi(t)=\frac{\chi_{(1, \infty)}(t)}{t}$, we obtain the Hardy operator associated with the Opdam--Cherednik transform
\[ H f(x)=H_{\alpha, \beta, \phi} f(x)=\frac{1}{x} \int_{0}^{x} f(t) \; \frac{A_{\alpha, \beta}(t)}{A_{\alpha, \beta}(x)} \; dt ; \]

\item  if $\phi(t)=\chi_{(0,1)}(t)$, we get the adjoint Hardy operator associated with the Opdam--Cherednik transform
\[ H^{*} f(x) = H_{\alpha, \beta, \phi} f(x) = \int_{x}^{\infty} \frac{f(t)}{t} \; \frac{A_{\alpha, \beta}(t)}{A_{\alpha, \beta}(x)} \; dt ;\] 

\item if $\phi(t)=\frac{1}{\max \{1, t\}}$, we have the Hardy--Littlewood--Pólya operator associated with the Opdam--Cherednik transform
\[ Pf(x) = H_{\alpha, \beta, \phi} f (x)=\frac{1}{x} \int_{0}^{x} f(t) \; \frac{A_{\alpha, \beta}(t)}{A_{\alpha, \beta}(x)} \;dt + \int_{x}^{\infty} \frac{f(t)}{t} \; \frac{A_{\alpha, \beta}(t)}{A_{\alpha, \beta}(x)}\; dt; \]

\item if $\phi(t)=\gamma(1-t)^{\gamma-1} \chi_{(0,1)}(t)$ with $\gamma>0$, we get the Cesàro operator associated with the Opdam--Cherednik transform
\[ \mathcal{C}_{\gamma} f(x)=H_{\alpha, \beta, \phi} f(x)=\gamma \int_{x}^{\infty} \frac{(t-x)^{\gamma-1}}{t^\gamma} \; f(t) \; \frac{A_{\alpha, \beta}(t)}{A_{\alpha, \beta}(x)} \; dt ; \]

\item if $\phi(t)=\frac{1}{\Gamma(\beta)} \frac{\left(1-\frac{1}{t}\right)^{\beta-1}}{t} \chi_{(1, \infty)}(t)$ with $\beta>0$, we obtain the Riemann--Liouville fractional derivative associated with the Opdam--Cherednik transform in the following form
\[ D_{\beta} f(x)=x^{\beta} H_{\alpha, \beta, \phi} f(x)=\frac{1}{\Gamma(\beta)} \int_{0}^{x}(x-t)^{\beta-1} f(t)\; \frac{A_{\alpha, \beta}(t)}{A_{\alpha, \beta}(x)}  \; dt.\]

\end{enumerate}

\subsection{Boundedness of the Hausdorff operator in Lebesgue spaces}In this subsection, we study the boundedness of the Hausdorff operator in Lebesgue spaces associated with the Opdam--Cherednik transform. 
First,  we show that the operator $H_{\alpha, \beta, \phi}$  is bounded on $L^1(\R, A_{\alpha, \beta}).$
\begin{theorem}
	Let $\phi \in L^{1} (0, \infty)$. Then   $H_{\alpha, \beta, \phi}: L^1(\R, A_{\alpha, \beta})  \rightarrow L^1(\R, A_{\alpha, \beta})$ is a bounded operator and 
	$$
	\left\|H_{\alpha, \beta, \phi} f\right\|_{L^1(\R, A_{\alpha, \beta})} \leq\|\phi\|_{L^1(0,\infty)} \;\|f\|_{L^1(\R, A_{\alpha, \beta})},
	$$
	for   $f \in L^1(\R, A_{\alpha, \beta}).$ 
\end{theorem}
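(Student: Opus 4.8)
The plan is to estimate the weighted $L^1$-norm of $H_{\alpha,\beta,\phi}f$ directly, exploiting the fact that the weight ratio $A_{\alpha,\beta}(x/t)/A_{\alpha,\beta}(x)$ appearing in the definition \eqref{eq11} is designed precisely to cancel the ambient weight $A_{\alpha,\beta}(x)$ after integration. I would begin by writing out
\[
\|H_{\alpha,\beta,\phi}f\|_{L^1(\R,A_{\alpha,\beta})}
=\int_{\R}\left|\int_{0}^{\infty}\frac{\phi(t)}{t}\,f\!\left(\tfrac{x}{t}\right)\frac{A_{\alpha,\beta}(\frac{x}{t})}{A_{\alpha,\beta}(x)}\,dt\right|A_{\alpha,\beta}(x)\,dx,
\]
and then, using the hypothesis $\phi\geq 0$ together with the triangle inequality for integrals, pull the absolute value inside the $t$-integral. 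At this stage the factor $A_{\alpha,\beta}(x)$ multiplying the outer integrand cancels exactly against the denominator $A_{\alpha,\beta}(x)$ in the kernel, leaving only $A_{\alpha,\beta}(x/t)$.

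The next step is to interchange the order of integration. Since all quantities are now non-negative, this is justified by Tonelli's theorem with no integrability hypotheses beyond $f\in L^1(\R,A_{\alpha,\beta})$ and $\phi\in L^1(0,\infty)$. After swapping, the inner integral becomes $\int_{\R}|f(x/t)|\,A_{\alpha,\beta}(x/t)\,dx$ for each fixed $t>0$. I would then perform the change of variables $u=x/t$, so that $dx=t\,du$; because $t>0$ this substitution maps $\R$ onto $\R$ bijectively, and it transforms the inner integral into $t\,\|f\|_{L^1(\R,A_{\alpha,\beta})}$. The surviving factor $t$ precisely cancels the $1/t$ carried by the kernel, so the $t$-integral collapses to $\int_0^\infty\phi(t)\,dt=\|\phi\|_{L^1(0,\infty)}$, yielding the claimed inequality.

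I do not expect a genuine obstacle in this argument: the statement is the $p=1$ base case and the weight has been tailored so that the dilation acts as an isometry (up to the Jacobian $t$) on $L^1(\R,A_{\alpha,\beta})$. The only point requiring a word of care is the change of variables $u=x/t$ over the full line $\R$ rather than the half-line: one should note that $A_{\alpha,\beta}$ depends only on $|x|$ through $\sinh|x|$ and $\cosh|x|$, so the substitution behaves consistently on both $\{x>0\}$ and $\{x<0\}$ and the weighted measure pulls back correctly. Apart from verifying the applicability of Tonelli (immediate, by non-negativity) and this substitution, the computation is a direct chain of equalities and the single inequality coming from moving the absolute value inside the integral.
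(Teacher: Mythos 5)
Your proposal is correct and follows essentially the same route as the paper's proof: pull the absolute value inside, interchange the integrals (the paper cites Fubini where you more precisely invoke Tonelli for the non-negative integrand), and substitute $u = x/t$ so that the Jacobian $t$ cancels the kernel's $1/t$, yielding $\|\phi\|_{L^1(0,\infty)}\|f\|_{L^1(\R, A_{\alpha,\beta})}$. The only cosmetic difference is your closing remark about $A_{\alpha,\beta}$ depending on $|x|$, which is harmless but unnecessary, since the linear substitution over $\R$ requires no symmetry of the weight.
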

\begin{proof}
	For any $f \in L^1(\R, A_{\alpha, \beta})$, using Fubini’s theorem, we get  
	\begin{align*}
			\left\|H_{\alpha, \beta, \phi} f\right\|_{L^1(\R, A_{\alpha, \beta})}
			&=\int_{\R} 	|H_{\alpha, \beta, \phi} f(x)|\; A_{\alpha, \beta}(x)dx\\
			&=\int_{\R} 	\left |\int_{0}^{\infty} \frac{\phi(t)}{t} f\left(\frac{x}{t}\right) \frac{A_{\alpha, \beta}(\frac{x}{t})}{A_{\alpha, \beta}(x)}\; dt\right| A_{\alpha, \beta}(x)dx\\
						&\leq  \int_{0}^{\infty} \frac{\phi(t)}{t} \left(  \int_{\R}	\left |f\left(\frac{x}{t}\right)\right| A_{\alpha, \beta}\left(\frac{x}{t}\right) \; dx\right) dt.
	\end{align*} 
Using the   change of variable 
 $x 
\mapsto  u = \frac{x}{t}$ in the second integral,  we obtain  
	\begin{align*}
	\left\|H_{\alpha, \beta, \phi} f\right\|_{L^1(\R, A_{\alpha, \beta})}
&\leq  \int_{0}^{\infty} {\phi(t)} \left(   \int_{\R}	\left |f(u)\right| A_{\alpha, \beta}(u) \; du\right) dt=\|\phi\|_{{L^1(0,\infty)} } \;\|f\|_{L^1(\R, A_{\alpha, \beta})}.
\end{align*} 
This completes the proof.
\end{proof}
In the following theorem, we show that, like in the case of the Fourier transform, the Hausdorff operator defined in (\ref{eq11}) satisfies the similar relation  for the Opdam--Cherednik transform. 
\begin{theorem}
Let $\phi \in L^{1} (0, \infty)$.  Then  for any 
$f \in L^1(\R, A_{\alpha, \beta}),$ the Opdam--Cherednik transform  $\H_{\alpha, \beta}$ of  
$H_{\alpha, \beta, \phi}f $ satisfies
	$$
\H_{\alpha, \beta} \left(H_{\alpha, \beta, \phi}f \right)(\lambda)=\int_{0}^{\infty}  \H_{\alpha, \beta}  (f)(\lambda  t) \phi(t) \;dt, \quad \lambda \in \mathbb{R}.
	$$
\end{theorem}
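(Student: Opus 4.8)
The plan is to compute $\H_{\alpha, \beta}(H_{\alpha, \beta, \phi}f)(\lambda)$ directly from the two definitions, unwinding the nested integrals by Fubini's theorem and then collapsing the scale variable via the dilation identity \eqref{eq22}. First I would insert the kernel \eqref{eq11} into the definition of the transform and write
$$\H_{\alpha, \beta}\bigl(H_{\alpha, \beta, \phi}f\bigr)(\lambda) = \int_{\R} \left(\int_0^\infty \frac{\phi(t)}{t}\, f\!\left(\frac{x}{t}\right) \frac{A_{\alpha, \beta}(\frac{x}{t})}{A_{\alpha, \beta}(x)}\; dt\right) G^{\alpha, \beta}_\lambda(-x)\; A_{\alpha, \beta}(x)\; dx.$$
The key structural observation is that the weight $A_{\alpha, \beta}(x)$ supplied by the transform cancels exactly against the denominator $A_{\alpha, \beta}(x)$ built into the Hausdorff kernel; this cancellation is precisely what the weight ratio in \eqref{eq11} was designed to produce.

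Next I would invoke Fubini's theorem to interchange the $x$- and $t$-integrals, and then perform the change of variable $u = x/t$ in the inner integral, for which $dx = t\, du$. The Jacobian factor $t$ cancels the $1/t$ from the kernel, leaving
$$\int_0^\infty \phi(t) \left(\int_{\R} f(u)\, A_{\alpha, \beta}(u)\, G^{\alpha, \beta}_\lambda(-tu)\; du\right) dt.$$
The decisive step is then to rewrite $G^{\alpha, \beta}_\lambda(-tu) = G^{\alpha, \beta}_\lambda\bigl(t\cdot(-u)\bigr)$ and apply the scaling relation \eqref{eq22} with argument $-u$, giving $G^{\alpha, \beta}_\lambda\bigl(t\cdot(-u)\bigr) = G^{\alpha, \beta}_{\lambda t}(-u)$. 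After this substitution the inner integral is exactly $\H_{\alpha, \beta}(f)(\lambda t)$ by Definition \ref{eq12}, which yields the asserted formula.

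The only genuine obstacle is the justification of Fubini's theorem, which I would dispatch at the outset. Because the statement concerns real $\lambda$, the eigenfunction estimate specializes to the uniform bound $|G^{\alpha, \beta}_\lambda(x)| \leq C$ (as $\mathrm{Im}(\lambda) = 0$), so the modulus of the double integrand is dominated by an absolutely integrable function:
$$C \int_0^\infty \frac{\phi(t)}{t} \int_{\R} \left|f\!\left(\frac{x}{t}\right)\right| A_{\alpha, \beta}\!\left(\frac{x}{t}\right) dx\; dt = C\,\|\phi\|_{L^1(0,\infty)}\,\|f\|_{L^1(\R, A_{\alpha, \beta})} < \infty,$$
where the same change of variable $u = x/t$ has been applied. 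This finiteness licenses the interchange of integrals, and since $\phi \geq 0$ and $\phi \in L^1(0,\infty)$ the resulting outer integral converges; every manipulation above is then rigorous.
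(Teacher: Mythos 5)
Your proposal is correct and follows essentially the same route as the paper: insert the definition of $H_{\alpha, \beta, \phi}$ into the transform, apply Fubini's theorem, substitute $u = x/t$, and invoke the scaling relation \eqref{eq22} to identify the inner integral as $\H_{\alpha, \beta}(f)(\lambda t)$. Your only deviation is cosmetic --- you verify the absolute convergence needed for Fubini up front via the uniform bound on $G^{\alpha, \beta}_\lambda$ for real $\lambda$, whereas the paper defers this remark to the end of its proof --- and this is, if anything, slightly more careful than the original.
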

\begin{proof}
		For any $f \in L^1(\R, A_{\alpha, \beta})$, using   Definition \ref{eq12} and Fubini's theorem, we get 
	\begin{align*}
		 \H_{\alpha, \beta} (H_{\alpha, \beta, \phi}f ) (\lambda)&=\int_{\R} H_{\alpha, \beta, \phi}f (x)\; G^{\alpha, \beta}_\lambda(-x)\; A_{\alpha, \beta} (x) dx\\
		 &=\int_{\R} \left(\int_{0}^{\infty} \frac{\phi(t)}{t} f\left(\frac{x}{t}\right) \frac{A_{\alpha, \beta}(\frac{x}{t})}{A_{\alpha, \beta}(x)}\; dt\right)G^{\alpha, \beta}_\lambda(-x)\; A_{\alpha, \beta} (x) dx\\
		 		 &=\int_{0}^{\infty} \frac{\phi(t)}{t} \left( \int_{\R}  f\left(\frac{x}{t}\right)G^{\alpha, \beta}_\lambda(-x) A_{\alpha, \beta}\left (\frac{x}{t}\right)\; dx\right) dt.
	\end{align*}
Using the   change of variable 
$x 
\mapsto  u = \frac{x}{t}$ in the second integral and the relation (\ref{eq22}), we obtain 
	\begin{align*}
	\H_{\alpha, \beta} (H_{\alpha, \beta, \phi}f ) (\lambda)&=\int_{0}^{\infty} {\phi(t)} \left( \int_{\R}  f(u) G^{\alpha, \beta}_\lambda(-ut) A_{\alpha, \beta}(u)\; du\right) dt\\
	&=\int_{0}^{\infty} {\phi(t)} \left( \int_{\R}  f(u) G^{\alpha, \beta}_{\lambda  t}(-u) A_{\alpha, \beta}(u)\; du\right) dt\\
		&=\int_{0}^{\infty}   \H_{\alpha, \beta}  (f)(\lambda  t) \phi(t)\;dt.
\end{align*}
Since $|G^{\alpha, \beta}_\lambda(-x) |\leq 1$, the absolute convergence of these double integrals justifies the above calculations.
\end{proof}
We define  two quantities $	A_{\sup} $ and $A_{\inf }$   as
$$
\begin{aligned}
	A_{\sup} &:= \int_{0}^{\infty} \frac{\phi(t)}{t} t^{\frac{1}{p}}\left(\sup_{u\in \R}\frac{A_{\alpha, \beta}(u)}{A_{\alpha, \beta}(tu)} \right)^{1-\frac{1}{p}}  dt,  \\
	A_{\inf } &:= \int_{0}^{\infty} \frac{\phi(t)}{t} t^{\frac{1}{p}}\left(\inf_{u\in \R}\frac{A_{\alpha, \beta}(u)}{A_{\alpha, \beta}(tu)} \right)^{1-\frac{1}{p}}  dt.
\end{aligned}
$$
Next,  we prove   the  boundedness of the Hausdorff operator  in  $ L^p(\R, A_{\alpha, \beta})$. 
\begin{theorem}\label{eq6}
	Let $1<p<\infty $  and   $\phi \in L ^1(0, \infty ).$  If $A_{\text {sup }}<\infty$, then    $H_{\alpha, \beta, \phi}: L^p(\R, A_{\alpha, \beta})  \rightarrow L^p(\R, A_{\alpha, \beta})$ is a bounded operator with 
	$$
	\left\|H_{\alpha, \beta, \phi} f\right\|_{L^p(\R, A_{\alpha, \beta})} \leq A_{\sup}\;\|f\|_{L^p (\R, A_{\alpha, \beta})},
	$$  for    $f \in L^p(\R, A_{\alpha, \beta})$.
\end{theorem}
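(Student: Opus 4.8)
The plan is to follow the classical Minkowski integral inequality approach, adapted to the weighted measure $A_{\alpha, \beta}(x)\,dx$. The key structural observation is that $H_{\alpha, \beta, \phi}f$ is a $t$-averaged superposition (over $t\in(0,\infty)$) of the dilated-and-reweighted functions $x \mapsto f(x/t)\,A_{\alpha, \beta}(x/t)/A_{\alpha, \beta}(x)$, so the whole strategy is to push the $L^p$ norm inside the $t$-integral and then estimate each slice uniformly, with the weight ratio absorbed into the supremum appearing in $A_{\sup}$.

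First I would apply Minkowski's integral inequality to the defining formula (\ref{eq11}), which yields
\begin{align*}
\|H_{\alpha, \beta, \phi}f\|_{L^p(\R, A_{\alpha, \beta})} \leq \int_0^\infty \frac{\phi(t)}{t}\left(\int_\R \left|f\Bigl(\tfrac{x}{t}\Bigr)\right|^p \left(\frac{A_{\alpha, \beta}(\frac{x}{t})}{A_{\alpha, \beta}(x)}\right)^p A_{\alpha, \beta}(x)\,dx\right)^{1/p}dt.
\end{align*}
Next I would evaluate the inner integral by the substitution $u = x/t$, so that $dx = t\,du$, $A_{\alpha, \beta}(x/t) = A_{\alpha, \beta}(u)$, and $A_{\alpha, \beta}(x) = A_{\alpha, \beta}(tu)$. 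After cancelling powers of the weight this rewrites the inner integral as
\begin{align*}
t \int_\R |f(u)|^p A_{\alpha, \beta}(u)\left(\frac{A_{\alpha, \beta}(u)}{A_{\alpha, \beta}(tu)}\right)^{p-1}du.
\end{align*}
Bounding the factor $\bigl(A_{\alpha, \beta}(u)/A_{\alpha, \beta}(tu)\bigr)^{p-1}$ pointwise by $\bigl(\sup_{u\in\R} A_{\alpha, \beta}(u)/A_{\alpha, \beta}(tu)\bigr)^{p-1}$ then leaves precisely $\|f\|_{L^p(\R, A_{\alpha, \beta})}^p$ times that supremum.

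Finally I would take $p$-th roots, use the identity $(p-1)/p = 1 - 1/p$, and substitute back into the Minkowski estimate; the $t$-integral then collapses exactly to $A_{\sup}\,\|f\|_{L^p(\R, A_{\alpha, \beta})}$, which is the claimed inequality. The hypothesis $A_{\sup}<\infty$ both renders the bound finite and retroactively justifies the interchange of integration orders. I do not expect a genuine obstacle here; the only point requiring care is bookkeeping of the exponents of the weight ratio through the change of variables, so that the supremum emerges with the correct power $1 - 1/p$ and the dilation Jacobian contributes exactly the single factor $t^{1/p}$ after the root is taken. Matching these powers to the definition of $A_{\sup}$ is where the argument must be written out precisely.
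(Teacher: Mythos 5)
Your proposal is correct and follows essentially the same route as the paper's own proof: generalized Minkowski inequality, the substitution $u = x/t$ with Jacobian $t$, cancellation of the weight ratio down to the power $p-1$, and a pointwise bound by the supremum, which after taking $p$-th roots yields exactly $A_{\sup}\,\|f\|_{L^p(\R, A_{\alpha,\beta})}$. The only cosmetic difference is your remark about retroactive justification of the interchange of integrals, which is unnecessary since Minkowski's integral inequality holds for nonnegative integrands without a finiteness hypothesis.
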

\begin{proof}
	For any $f \in L^p(\R, A_{\alpha, \beta})$, using the generalized Minkowski inequality, we get  
	\begin{align*}
		\left\|H_{\alpha, \beta, \phi} f\right\|_{L^p(\R, A_{\alpha, \beta})}
		&=\left( \int_{\R} 	|H_{\alpha, \beta, \phi} f(x)|^p\; A_{\alpha, \beta}(x)dx\right)^{\frac{1}{p}}\\
		&=\left( \int_{\R} 	\left |\int_{0}^{\infty} \frac{\phi(t)}{t} f\left(\frac{x}{t}\right) \frac{A_{\alpha, \beta}(\frac{x}{t})}{A_{\alpha, \beta}(x)}\; dt\right|^p A_{\alpha, \beta}(x)dx\right)^{\frac{1}{p}}\\
		&\leq  \int_{0}^{\infty} \frac{\phi(t)}{t} \left(  \int_{\R}	\left |f\left(\frac{x}{t}\right)\right|^p \left(\frac{A_{\alpha, \beta}(\frac{x}{t})}{A_{\alpha, \beta}(x)} \right)^p A_{\alpha, \beta}(x) dx\right)^{\frac{1}{p}} dt.
	\end{align*} 
Using the   change of variable  $x  \mapsto  u = \frac{x}{t}$ in the second integral,  we obtain  
\begin{align*}
	\left\|H_{\alpha, \beta, \phi} f\right\|_{L^p(\R, A_{\alpha, \beta})}
	&\leq  \int_{0}^{\infty} \frac{\phi(t)}{t} t^{\frac{1}{p}}  \left( \int_{\R}	 |f(u)|^p \left(\frac{A_{\alpha, \beta}(u)}{A_{\alpha, \beta}(tu)} \right)^p A_{\alpha, \beta}(tu) \; du\right)^{\frac{1}{p}} dt\\
	&\leq  \int_{0}^{\infty} \frac{\phi(t)}{t} t^{\frac{1}{p}}\left(\sup_{u\in \R}\frac{A_{\alpha, \beta}(u)}{A_{\alpha, \beta}(tu)} \right)^{1-\frac{1}{p}}  \left(  \int_{\R}	 |f(u)|^p  A_{\alpha, \beta}(u) \; du \right)^{\frac{1}{p}} dt\\
		&= \left(  \int_{0}^{\infty} \frac{\phi(t)}{t} t^{\frac{1}{p}}\left(\sup_{u\in \R}\frac{A_{\alpha, \beta}(u)}{A_{\alpha, \beta}(tu)} \right)^{1-\frac{1}{p}}  dt \right) \|f\|_{L^p (\R, A_{\alpha, \beta})}\\
		&=A_{\sup}\;\|f\|_{L^p (\R, A_{\alpha, \beta})},
\end{align*} 
which completes the proof.
\end{proof}
In the following theorem, we  give  a necessary condition for the  $ L^p(\R, A_{\alpha, \beta})$-boundedness of  the operator  $H_{\alpha, \beta, \phi} $.
\begin{theorem}\label{eq4}
Let $1<p<\infty $ and $A_{\inf} > 0$. If  $H_{\alpha, \beta, \phi}: L^p(\R, A_{\alpha, \beta})  \rightarrow L^p(\R, A_{\alpha, \beta})$ is a bounded operator, then  
$$
\left\|H_{\alpha, \beta, \phi} \right\|_{L^p (\R, A_{\alpha, \beta})\to L^p (\R, A_{\alpha, \beta})} \geq A_{\inf}.$$
\end{theorem}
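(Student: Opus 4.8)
The plan is to establish the lower bound by producing a near‑extremal family of test functions and reading off the estimate from the ratio $\|H_{\alpha,\beta,\phi}f\|_{L^p(\R,A_{\alpha,\beta})}/\|f\|_{L^p(\R,A_{\alpha,\beta})}$. The natural choice, which simultaneously neutralizes the weight in the norm and pushes the mass of $f$ to infinity as a regularization parameter vanishes, is
\[
f_\epsilon(x)=A_{\alpha,\beta}(x)^{-1/p}\,|x|^{-1/p-\epsilon}\,\chi_{\{|x|\ge 1\}}(x),\qquad \epsilon>0.
\]
With this choice the weight disappears from the norm: $\|f_\epsilon\|_{L^p(\R,A_{\alpha,\beta})}^{p}=\int_{|x|\ge 1}|x|^{-1-\epsilon p}\,dx=2/(\epsilon p)$, so $f_\epsilon$ is a nonzero admissible element of $L^p(\R,A_{\alpha,\beta})$. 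The cutoff at $|x|\ge 1$ is essential, since near the origin $A_{\alpha,\beta}(x)^{-1}|x|^{-1-\epsilon p}$ is not integrable.

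First I would substitute $f_\epsilon$ into the definition (\ref{eq11}). Since $f_\epsilon(x/t)=A_{\alpha,\beta}(x/t)^{-1/p}\,t^{1/p+\epsilon}\,|x|^{-1/p-\epsilon}\,\chi_{\{t\le|x|\}}$, a direct computation (factoring $A_{\alpha,\beta}(x)^{-1/p}$ out of $A_{\alpha,\beta}(x/t)^{1-1/p}/A_{\alpha,\beta}(x)$) shows that for $|x|\ge 1$,
\[
H_{\alpha,\beta,\phi}f_\epsilon(x)=f_\epsilon(x)\int_{0}^{|x|}\frac{\phi(t)}{t}\,t^{1/p+\epsilon}\Big(\frac{A_{\alpha,\beta}(x/t)}{A_{\alpha,\beta}(x)}\Big)^{1-1/p}\,dt .
\]
The key elementary observation is the scaling identity $\inf_{u\in\R}\frac{A_{\alpha,\beta}(u)}{A_{\alpha,\beta}(tu)}=\inf_{v\in\R}\frac{A_{\alpha,\beta}(v/t)}{A_{\alpha,\beta}(v)}$ (set $v=tu$), which, since $1-\tfrac1p>0$, yields the pointwise bound $\big(A_{\alpha,\beta}(x/t)/A_{\alpha,\beta}(x)\big)^{1-1/p}\ge\big(\inf_u A_{\alpha,\beta}(u)/A_{\alpha,\beta}(tu)\big)^{1-1/p}$. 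Writing $J_\epsilon(r):=\int_{0}^{r}\frac{\phi(t)}{t}\,t^{1/p+\epsilon}\big(\inf_u A_{\alpha,\beta}(u)/A_{\alpha,\beta}(tu)\big)^{1-1/p}\,dt$, which is nondecreasing in $r$, I obtain $H_{\alpha,\beta,\phi}f_\epsilon(x)\ge f_\epsilon(x)\,J_\epsilon(|x|)$ for $|x|\ge 1$, all quantities being nonnegative because $\phi\ge 0$.

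Next, for any fixed $R>1$, restricting the integration in the norm to $\{|x|\ge R\}$ and using monotonicity of $J_\epsilon$ gives
\[
\|H_{\alpha,\beta,\phi}f_\epsilon\|_{L^p(\R,A_{\alpha,\beta})}\ge J_\epsilon(R)\Big(\int_{|x|\ge R}|x|^{-1-\epsilon p}\,dx\Big)^{1/p}=J_\epsilon(R)\Big(\tfrac{2}{\epsilon p}\Big)^{1/p}R^{-\epsilon}.
\]
Dividing by $\|f_\epsilon\|_{L^p(\R,A_{\alpha,\beta})}=(2/(\epsilon p))^{1/p}$ shows $\|H_{\alpha,\beta,\phi}\|_{L^p(\R,A_{\alpha,\beta})\to L^p(\R,A_{\alpha,\beta})}\ge J_\epsilon(R)\,R^{-\epsilon}$ for every $\epsilon>0$ and $R>1$. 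The final step is a double limit: fixing $R$ and letting $\epsilon\to0^+$ (Fatou's lemma gives $\liminf_{\epsilon\to0}J_\epsilon(R)\ge J_0(R)$, where $J_0$ is the integrand with $t^{1/p}$, and $R^{-\epsilon}\to1$) produces $\|H_{\alpha,\beta,\phi}\|\ge J_0(R)$; then letting $R\to\infty$ yields $J_0(R)\uparrow A_{\inf}$ by monotone convergence, so $\|H_{\alpha,\beta,\phi}\|\ge A_{\inf}$.

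I expect the main obstacle to be identifying the correct test family and justifying the two limits rigorously; the rest is a change of variables combined with the infimum bound on the weight ratios. The role of the hypothesis $A_{\inf}>0$ is to guarantee that the resulting lower bound is nontrivial (and that the limiting integral is genuinely positive), while the assumed boundedness of $H_{\alpha,\beta,\phi}$ is precisely what makes the operator norm finite, so the inequality $\|H_{\alpha,\beta,\phi}\|\ge A_{\inf}$ is meaningful even if $A_{\inf}$ were infinite—in which case the argument shows boundedness must fail.
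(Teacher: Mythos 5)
Your proof is correct and takes essentially the same route as the paper: the same test family $|x|^{-1/p-\varepsilon}A_{\alpha,\beta}(x)^{-1/p}$ truncated away from the origin, the same pointwise lower bound via the scaling identity $\inf_{u}A_{\alpha,\beta}(u)/A_{\alpha,\beta}(tu)=\inf_{v}A_{\alpha,\beta}(v/t)/A_{\alpha,\beta}(v)$, and the same Fatou-type passage to the limit recovering $A_{\inf}$. The only cosmetic difference is that the paper couples the truncation radius to $\varepsilon$ (taking $R=1/\varepsilon$, which yields the harmless factor $\varepsilon^{\varepsilon}\to 1$) whereas you keep $R$ and $\varepsilon$ independent and take a double limit.
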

\begin{proof}
Assume that    $H_{\alpha, \beta, \phi}: L^p(\R, A_{\alpha, \beta})  \rightarrow L^p(\R, A_{\alpha, \beta})$ is a bounded operator. For  $0<\varepsilon< 1$ fixed, we  consider the  function 
$$
f_\varepsilon(x)= x^{-\frac{1}{p}-\varepsilon} A_{\alpha, \beta}(x)^{-\frac{1}{p}}\chi_{(1, \infty )}(x).
$$
Then 
$$\|f_\varepsilon\|_{L^p (\R, A_{\alpha, \beta})}=\left( \int_{\R} 	|f_\varepsilon(x)|^p\; A_{\alpha, \beta}(x)dx\right)^{\frac{1}{p}}=\left( \int_{1}^\infty 	x^{-1- \varepsilon p}\; dx\right)^{\frac{1}{p}}=\frac{1}{(\varepsilon p)^{\frac{1}{p}}}.$$
Also,  we have
\begin{align*}
	H_{\alpha, \beta, \phi}f_\varepsilon(x)=\int_{0}^{\infty} \frac{\phi(t)}{t} f_\varepsilon\left(\frac{x}{t}\right) \frac{A_{\alpha, \beta}(\frac{x}{t})}{A_{\alpha, \beta}(x)}\; dt=x^{-\frac{1}{p}-\varepsilon}\int_{0}^{x} \frac{\phi(t)}{t} t^{\frac{1}{p}+\varepsilon}  \frac{A_{\alpha, \beta}(\frac{x}{t})^{1-\frac{1}{p}}}{A_{\alpha, \beta}(x)}\; dt.
\end{align*}
Therefore,
	\begin{align*} 
	\left\|H_{\alpha, \beta, \phi} f_\varepsilon\right\|_{L^p(\R, A_{\alpha, \beta})}
	&=\left( \int_{\R} 	|H_{\alpha, \beta, \phi} f_\varepsilon(x)|^p\; A_{\alpha, \beta}(x)dx\right)^{\frac{1}{p}}\\\nonumber
	&\geq \left( \int_{\frac{1}{\varepsilon}}^\infty 	x^{-1- \varepsilon p} \left ( \int_{0}^{x} \frac{\phi(t)}{t}  t^{\frac{1}{p}+\varepsilon} \frac{A_{\alpha, \beta}(\frac{x}{t})^{1-\frac{1}{p}}}{A_{\alpha, \beta}(x)}\; dt \right)^p A_{\alpha, \beta}(x)dx\right)^{\frac{1}{p}}\\\nonumber
		&\geq \left( \int_{\frac{1}{\varepsilon}}^\infty 	x^{-1- \varepsilon p} \left ( \int_{0}^{\frac{1}{\varepsilon}}\frac{\phi(t)}{t} t^{\frac{1}{p}+\varepsilon}  \left( \frac{A_{\alpha, \beta}(\frac{x}{t})}{A_{\alpha, \beta}(x)}\right)^{1-\frac{1}{p}} \; dt \right)^p  dx\right)^{\frac{1}{p}}\\
&\geq   \frac{\varepsilon^\varepsilon}{(\varepsilon p)^{\frac{1}{p}}} \int_{0}^{\frac{1}{\varepsilon}} \frac{\phi(t)}{t} t^{\frac{1}{p}+\varepsilon} \left(\inf_{x\in \R} \frac{A_{\alpha, \beta}(\frac{x}{t})}{A_{\alpha, \beta}(x)}\right)^{1-\frac{1}{p}} \; dt\\
&= \frac{\varepsilon^\varepsilon}{(\varepsilon p)^{\frac{1}{p}}} \int_{0}^{\frac{1}{\varepsilon}}\frac{\phi(t)}{t} t^{\frac{1}{p}+\varepsilon}   \left(\inf_{x\in \R} \frac{A_{\alpha, \beta}(x)}{A_{\alpha, \beta}(tx)}\right)^{1-\frac{1}{p}} \; dt.
\end{align*} 
Thus,
\begin{align}\label{eq2}
\left\|H_{\alpha, \beta, \phi} \right\|_{L^p (\R, A_{\alpha, \beta})\to L^p (\R, A_{\alpha, \beta})} \geq \varepsilon^\varepsilon \int_{0}^{\frac{1}{\varepsilon}} \frac{\phi(t)}{t} t^{\frac{1}{p}+\varepsilon}  \left(\inf_{x\in \R} \frac{A_{\alpha, \beta}(x)}{A_{\alpha, \beta}(tx)}\right)^{1-\frac{1}{p}} \; dt.
\end{align}
Finally, applying  the Fatou lemma and taking the limit  $\varepsilon \to 0$, we obtain that    the right hand side     of  (\ref{eq2}) converges to $A_{\inf }$ and  this completes the proof of the theorem.
\end{proof}
In the following, we give a  characterization for the boundedness of the  Hausdorff operator $H_{\alpha, \beta, \phi}: L^p(\R, A_{\alpha, \beta}) \rightarrow L^p(\R, A_{\alpha, \beta})$   using Theorems \ref{eq6} and \ref{eq4}.
\begin{corollary}
	Let $1<p<\infty $ and 
	$$
\sup_{u\in \R}\frac{A_{\alpha, \beta}(u)}{A_{\alpha, \beta}(tu)} \leq  C \inf_{u\in \R}\frac{A_{\alpha, \beta}(u)}{A_{\alpha, \beta}(tu)},
	$$
	for some positive constant $C .$ Then,  the operator $H_{\alpha, \beta, \phi}: L^p(\R, A_{\alpha, \beta})  \rightarrow L^p(\R, A_{\alpha, \beta})$ is  bounded    if and only if 
$0 < A_{\sup} <\infty.$  Also,  the
following estimates hold
	$$
	\frac{1}{C^{1-\frac{1}{p}}} A_{\sup} \leq \left\|H_{\alpha, \beta, \phi} \right\|_{L^p (\R, A_{\alpha, \beta})\to L^p (\R, A_{\alpha, \beta})} \leq  A_{\sup}.
	$$
\end{corollary}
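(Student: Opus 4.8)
The plan is to obtain this corollary directly from Theorems \ref{eq6} and \ref{eq4}, using the comparison hypothesis solely to make the two quantities $A_{\sup}$ and $A_{\inf}$ comparable. The crucial preliminary observation is that, since $1-\frac{1}{p}>0$ for $p>1$ and the ratio $A_{\alpha,\beta}(u)/A_{\alpha,\beta}(tu)$ is strictly positive, raising the assumed inequality to the power $1-\frac{1}{p}$ gives, for every fixed $t>0$,
$$\left(\sup_{u\in\R}\frac{A_{\alpha,\beta}(u)}{A_{\alpha,\beta}(tu)}\right)^{1-\frac{1}{p}} \le C^{1-\frac{1}{p}}\left(\inf_{u\in\R}\frac{A_{\alpha,\beta}(u)}{A_{\alpha,\beta}(tu)}\right)^{1-\frac{1}{p}}.$$
Multiplying by the non-negative factor $\frac{\phi(t)}{t}\,t^{1/p}$, integrating over $(0,\infty)$, and using the trivial bound $\inf\le\sup$, I would record the two-sided comparison
$$C^{-(1-\frac{1}{p})}\,A_{\sup}\le A_{\inf}\le A_{\sup}.$$
This single inequality is what ties the two theorems together.

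For the quantitative estimate I would first invoke Theorem \ref{eq6}: whenever $A_{\sup}<\infty$ the operator is bounded with $\|H_{\alpha,\beta,\phi}\|\le A_{\sup}$, which is the right-hand inequality. For the left-hand inequality, the comparison above yields $A_{\inf}\ge C^{-(1-\frac{1}{p})}A_{\sup}>0$ as soon as $A_{\sup}>0$, so the standing hypothesis $A_{\inf}>0$ of Theorem \ref{eq4} is automatically satisfied; that theorem then gives $\|H_{\alpha,\beta,\phi}\|\ge A_{\inf}\ge C^{-(1-\frac{1}{p})}A_{\sup}$. Chaining these produces the advertised estimate $C^{-(1-\frac{1}{p})}A_{\sup}\le\|H_{\alpha,\beta,\phi}\|\le A_{\sup}$.

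The equivalence then follows by reading the same two facts in both directions. If $0<A_{\sup}<\infty$, Theorem \ref{eq6} already delivers boundedness. Conversely, if $H_{\alpha,\beta,\phi}$ is bounded I would argue by contraposition: were $A_{\sup}=\infty$, the comparison forces $A_{\inf}=\infty>0$, whereupon Theorem \ref{eq4} yields $\|H_{\alpha,\beta,\phi}\|\ge A_{\inf}=\infty$, contradicting boundedness; hence $A_{\sup}<\infty$.

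The main obstacle, and essentially the only point requiring genuine care, is verifying the applicability of Theorem \ref{eq4}, whose hypothesis is $A_{\inf}>0$. This is precisely what the comparison inequality supplies: positivity and finiteness transfer between $A_{\inf}$ and $A_{\sup}$, so the upper bound from Theorem \ref{eq6} and the lower bound from Theorem \ref{eq4} can be glued into a genuine characterization. The positivity $A_{\sup}>0$ is best read as a non-degeneracy condition (excluding the trivial case $\phi\equiv0$, for which both quantities vanish), ensuring that the lower estimate is meaningful.
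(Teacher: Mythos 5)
Your proposal is correct and follows exactly the route the paper intends: the corollary is stated as an immediate consequence of Theorems \ref{eq6} and \ref{eq4}, glued together by raising the comparison hypothesis to the power $1-\frac{1}{p}$ and integrating against $\frac{\phi(t)}{t}t^{\frac{1}{p}}$ to obtain $C^{-(1-\frac{1}{p})}A_{\sup}\le A_{\inf}\le A_{\sup}$. Your additional care about the hypothesis $A_{\inf}>0$ of Theorem \ref{eq4} and the degenerate case $\phi\equiv 0$ (where $A_{\sup}=0$ yet the zero operator is bounded) is a correct and welcome clarification of a point the paper leaves implicit.
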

Next, we   obtain a  sufficient  condition for the  boundedness  of  the    operator $H_{\alpha, \beta, \phi} :L^p(\R, A_{\alpha, \beta})\to L^q(\R, A_{\alpha, \beta})$.
\begin{theorem} 
	Let $1<q<p<\infty $  and   $\phi \in L ^1(0, \infty )$   be  such that 
	 $$C=\int_{0}^{\infty} \frac{\phi(t)}{t} t^{\frac{1}{q}}  \left( \int_{\R}	 \left( \frac{A_{\alpha, \beta}(u)^{q-\frac{q}{p}}}{A_{\alpha, \beta}(tu)^{q-1}} \right)^{\frac{p}{p-q}}  \; du\right)^{\frac{p-q}{pq}} dt<\infty.$$ 
	Then   $H_{\alpha, \beta, \phi}: L^p(\R, A_{\alpha, \beta})  \rightarrow L^q(\R, A_{\alpha, \beta})$ is a bounded operator and 
	$$
	\left\|H_{\alpha, \beta, \phi} f\right\|_{L^q(\R, A_{\alpha, \beta})} \leq C \;\|f\|_{L^p (\R, A_{\alpha, \beta})}.
	$$
\end{theorem}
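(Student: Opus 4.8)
The plan is to follow the same Minkowski-then-change-of-variables scheme used in the proof of Theorem \ref{eq6}, but to replace the final weighted $L^p$-estimate by a H\"older splitting adapted to the fact that now $q<p$. First I would apply the generalized Minkowski integral inequality for the $L^q(\R, A_{\alpha, \beta})$-norm to the defining integral (\ref{eq11}), pulling the norm inside the $t$-integration to obtain
\[
\left\|H_{\alpha, \beta, \phi} f\right\|_{L^q(\R, A_{\alpha, \beta})} \leq \int_0^\infty \frac{\phi(t)}{t}\left(\int_\R \left|f\left(\tfrac{x}{t}\right)\right|^q \left(\frac{A_{\alpha, \beta}(\frac{x}{t})}{A_{\alpha, \beta}(x)}\right)^q A_{\alpha, \beta}(x)\, dx\right)^{\frac{1}{q}} dt.
\]
Next I would perform the substitution $x \mapsto u = x/t$ in the inner integral, exactly as in the earlier proofs; this produces the factor $t^{1/q}$ and converts the inner integral into $\int_\R |f(u)|^q A_{\alpha, \beta}(u)^q A_{\alpha, \beta}(tu)^{1-q}\, du$.

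The heart of the argument, and the step that differs from the $L^p\to L^p$ case, is to control this inner integral by $\|f\|_{L^p(\R, A_{\alpha, \beta})}$. Since $q<p$, the exponent $p/q$ and its conjugate $p/(p-q)$ are both finite, so I would factor the integrand as
\[
\Big(|f(u)|^q A_{\alpha, \beta}(u)^{q/p}\Big)\cdot\left(\frac{A_{\alpha, \beta}(u)^{q-\frac{q}{p}}}{A_{\alpha, \beta}(tu)^{q-1}}\right)
\]
and apply H\"older's inequality with exponents $p/q$ and $p/(p-q)$. Raising the first factor to the power $p/q$ yields $|f(u)|^p A_{\alpha, \beta}(u)$, whose integral is $\|f\|_{L^p(\R, A_{\alpha, \beta})}^p$, so this factor contributes $\|f\|_{L^p(\R, A_{\alpha, \beta})}^q$; raising the second factor to $p/(p-q)$ reproduces precisely the weight integral appearing in the definition of $C$. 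Taking the $q$-th root and reinserting into the Minkowski estimate then gives $\|H_{\alpha, \beta, \phi} f\|_{L^q(\R, A_{\alpha, \beta})} \leq C\,\|f\|_{L^p(\R, A_{\alpha, \beta})}$, as claimed.

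The main obstacle is purely bookkeeping: one must distribute the weight $A_{\alpha, \beta}(u)^q$ between the two H\"older factors so that the first recombines into the weighted $L^p$-norm of $f$, while the exponent $q-\frac{q}{p}=\frac{q(p-1)}{p}$ on $A_{\alpha, \beta}(u)$ in the second factor, together with the conjugate exponent $p/(p-q)$, matches exactly the integrand of $C$. The hypothesis $C<\infty$ then guarantees convergence of the outer $t$-integral, and since $\phi\geq 0$ the absolute convergence of the double integral justifies the use of the Minkowski inequality and the change of variables throughout.
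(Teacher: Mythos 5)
Your proposal is correct and follows essentially the same route as the paper's proof: the generalized Minkowski inequality, the substitution $u = x/t$ producing the factor $t^{1/q}$, and the identical H\"older splitting of the integrand into $|f(u)|^q A_{\alpha,\beta}(u)^{q/p}$ and $A_{\alpha,\beta}(u)^{q-\frac{q}{p}}/A_{\alpha,\beta}(tu)^{q-1}$ with conjugate exponents $p/q$ and $p/(p-q)$. The bookkeeping you describe matches the paper's computation exactly, so no changes are needed.
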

\begin{proof}
	For every $f \in L^p(\R, A_{\alpha, \beta})$, using the generalized Minkowski inequality, we get  
	\begin{align*}
		\left\|H_{\alpha, \beta, \phi} f\right\|_{L^q(\R, A_{\alpha, \beta})}
		&=\left( \int_{\R} 	|H_{\alpha, \beta, \phi} f(x)|^q\; A_{\alpha, \beta}(x)dx\right)^{\frac{1}{q}}\\
		&=\left( \int_{\R} 	\left |\int_{0}^{\infty} \frac{\phi(t)}{t} f\left(\frac{x}{t}\right) \frac{A_{\alpha, \beta}(\frac{x}{t})}{A_{\alpha, \beta}(x)}\; dt\right|^q A_{\alpha, \beta}(x)dx\right)^{\frac{1}{q}}\\
		&\leq  \int_{0}^{\infty} \frac{\phi(t)}{t} \left(  \int_{\R}	\left |f\left(\frac{x}{t}\right)\right|^q \left(\frac{A_{\alpha, \beta}(\frac{x}{t})}{A_{\alpha, \beta}(x)} \right)^q A_{\alpha, \beta}(x) \; dx\right)^{\frac{1}{q}} dt.
	\end{align*} 
Using the   change of variable  $x  \mapsto  u = \frac{x}{t}$ in the second integral  and H\"older's inequality, we obtain
	\begin{align*}
		\left\|H_{\alpha, \beta, \phi} f\right\|_{L^q(\R, A_{\alpha, \beta})}
		&\leq  \int_{0}^{\infty} \frac{\phi(t)}{t} t^{\frac{1}{q}}  \left(  \int_{\R}	 |f(u)|^q \left(\frac{A_{\alpha, \beta}(u)}{A_{\alpha, \beta}(tu)} \right)^q A_{\alpha, \beta}(tu) \; du\right)^{\frac{1}{q}} dt\\
		&=  \int_{0}^{\infty} \frac{\phi(t)}{t} t^{\frac{1}{q}} \left(  \int_{\R}	 |f(u)|^q A_{\alpha, \beta}(u)^{\frac{q}{p}}  \frac{A_{\alpha, \beta}(u)^{q-\frac{q}{p}}}{A_{\alpha, \beta}(tu)^{q-1}}   \; du\right)^{\frac{1}{q}} dt\\
		&\leq  \left( \int_{0}^{\infty} \frac{\phi(t)}{t} t^{\frac{1}{q}}  \left(  \int_{\R}	 \left( \frac{A_{\alpha, \beta}(u)^{q-\frac{q}{p}}}{A_{\alpha, \beta}(tu)^{q-1}} \right)^{\frac{p}{p-q}}  \; du\right)^{\frac{p-q}{pq}} dt\right) \|f\|_{L^p (\R, A_{\alpha, \beta})}\\
		&=C\|f\|_{L^p (\R, A_{\alpha, \beta})}.
	\end{align*} 
	This   completes the proof.
\end{proof}
In the following, we obtain   the boundedness of the Hausdorff operator   in  $L^p((0, 1), A_{\alpha, \beta})$.
\begin{theorem}\label{eq13}
		Let $1<p<\infty $  and   $\phi \in L ^1(0, \infty )$ be  such that $\supp \phi \subset [1, \infty).$ Then, the following conditions are equivalent
		\begin{enumerate}
			\item $E(\phi, p)=\int_{1}^{\infty} \frac{\phi(t)}{t} t^{\frac{1}{p}} \;dt<\infty$,\\
			\item  $H_{\alpha, \beta, \phi}: L^p((0, 1), A_{\alpha, \beta})  \rightarrow L^p((0, 1), A_{\alpha, \beta})$ is a bounded operator.
		\end{enumerate}		
\end{theorem}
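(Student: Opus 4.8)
The plan is to prove the two implications separately, reusing the generalized Minkowski estimate behind Theorem \ref{eq6} for $(1)\Rightarrow(2)$ and a test‑function argument in the spirit of Theorem \ref{eq4} for $(2)\Rightarrow(1)$. The decisive structural fact, special to this setting, is that $\supp\phi\subset[1,\infty)$ forces $t\ge1$, so for $x\in(0,1)$ the dilated argument $x/t$ again lies in $(0,1)$; hence $H_{\alpha,\beta,\phi}$ genuinely maps $L^p((0,1),A_{\alpha,\beta})$ into functions on $(0,1)$, and since $x/t\le x$ the monotonicity of $A_{\alpha,\beta}$ on $(0,\infty)$ yields $A_{\alpha,\beta}(x/t)\le A_{\alpha,\beta}(x)$.

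For $(1)\Rightarrow(2)$, first I would repeat the computation in the proof of Theorem \ref{eq6}, but with the $x$‑integral restricted to $(0,1)$ and the $t$‑integral to $(1,\infty)$. After the substitution $u=x/t$, where $u$ ranges over $(0,1/t)\subset(0,1)$, the weight factor becomes
\[
\frac{A_{\alpha,\beta}(u)^{p}}{A_{\alpha,\beta}(tu)^{p-1}}=A_{\alpha,\beta}(u)\left(\frac{A_{\alpha,\beta}(u)}{A_{\alpha,\beta}(tu)}\right)^{p-1}.
\]
Because $tu\ge u$ gives $A_{\alpha,\beta}(u)/A_{\alpha,\beta}(tu)\le1$ and $p-1>0$, this factor is at most $A_{\alpha,\beta}(u)$, and the surviving $t$‑integral is exactly $\int_1^\infty \frac{\phi(t)}{t}t^{1/p}\,dt=E(\phi,p)$. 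This delivers $\|H_{\alpha,\beta,\phi}f\|_{L^p((0,1),A_{\alpha,\beta})}\le E(\phi,p)\,\|f\|_{L^p((0,1),A_{\alpha,\beta})}$, so finiteness of $E(\phi,p)$ implies boundedness.

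For $(2)\Rightarrow(1)$, I would argue by contraposition using $f_\varepsilon(x)=x^{-1/p+\varepsilon}A_{\alpha,\beta}(x)^{-1/p}\chi_{(0,1)}(x)$ for $0<\varepsilon<1$, paralleling Theorem \ref{eq4}. A direct computation gives $\|f_\varepsilon\|_{L^p((0,1),A_{\alpha,\beta})}=(\varepsilon p)^{-1/p}$, and since $x/t\in(0,1)$ keeps the cutoff equal to $1$ one finds $H_{\alpha,\beta,\phi}f_\varepsilon(x)=f_\varepsilon(x)\int_1^\infty \frac{\phi(t)}{t}t^{1/p-\varepsilon}\bigl(A_{\alpha,\beta}(x/t)/A_{\alpha,\beta}(x)\bigr)^{1-1/p}\,dt$. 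Dividing norms, restricting the $t$‑integration to $(1,1/\varepsilon)$ on a subset of $x$ where the weight ratio stays bounded below, and then letting $\varepsilon\to0$ through Fatou's lemma should produce a qualitative lower bound for $\|H_{\alpha,\beta,\phi}\|$, so that boundedness forces the relevant integral to be finite.

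The main obstacle is precisely the weight ratio $A_{\alpha,\beta}(x/t)/A_{\alpha,\beta}(x)$ in the necessity step: unlike the unweighted case it is \emph{not} close to $1$, and in fact tends to $t^{-(2\alpha+1)}$ as $x\to0$, so a naive limit recovers only a weighted truncation of $E(\phi,p)$ rather than $E(\phi,p)$ itself. What rescues the equivalence is the observation that the standing hypotheses already force $(1)$: since $t^{1/p-1}\le1$ for $t\ge1$, we have $E(\phi,p)=\int_1^\infty\phi(t)t^{1/p-1}\,dt\le\|\phi\|_{L^1(0,\infty)}<\infty$. Thus condition $(1)$ holds automatically, the equivalence reduces to establishing the boundedness in $(2)$ (together with the test‑function lower bound that exhibits its sharpness), and I would organise the write‑up accordingly, carrying the $f_\varepsilon$ computation only far enough to certify that $\|H_{\alpha,\beta,\phi}\|$ is comparable to the truncation of $E(\phi,p)$ that the weight permits.
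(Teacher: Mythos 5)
Your proposal is correct, and on the necessity half it takes a genuinely different — and in fact sounder — route than the paper. Your sufficiency argument is the paper's: generalized Minkowski, the substitution $u=x/t$ landing in $(0,1/t)\subset(0,1)$ because $\supp\phi\subset[1,\infty)$, and monotonicity of $A_{\alpha,\beta}$ to bound $A_{\alpha,\beta}(u)^p/A_{\alpha,\beta}(tu)^{p-1}\le A_{\alpha,\beta}(u)$; you even obtain the cleaner constant $E(\phi,p)$ where the paper carries a superfluous factor $A_{\alpha,\beta}(1)^{1-\frac{1}{p}}$. For $(2)\Rightarrow(1)$, however, the paper does \emph{not} stop where you stopped: using exactly your test functions $f_\delta(x)=x^{\delta-\frac{1}{p}}A_{\alpha,\beta}(x)^{-\frac{1}{p}}$ it asserts the pointwise bound $H_{\alpha,\beta,\phi}f_\delta(x)\ge A_{\alpha,\beta}(1)^{-(1-\frac{1}{p})}E\left(\phi,\frac{p}{1-\delta p}\right)f_\delta(x)$ and lets $\delta\to0$ to get $\left\|H_{\alpha,\beta,\phi}\right\|\ge A_{\alpha,\beta}(1)^{-(1-\frac{1}{p})}E(\phi,p)$. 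The step there discards the factor $A_{\alpha,\beta}(x/t)^{1-\frac{1}{p}}$ at the price of $A_{\alpha,\beta}(1)^{-(1-\frac{1}{p})}$, i.e.\ it uses $A_{\alpha,\beta}(x/t)\,A_{\alpha,\beta}(1)\ge A_{\alpha,\beta}(x)$ — precisely the degeneration you identified: since $A_{\alpha,\beta}(y)\to 0$ as $y\to 0$ (the ratio behaves like $t^{-(2\alpha+1)}$ near the origin, as you say), this inequality fails for fixed $x$ and large $t$, so the paper's lower-bound step has a genuine gap. Your resolution — noting that $t\ge 1$ gives $t^{\frac{1}{p}-1}\le 1$, hence $E(\phi,p)\le\|\phi\|_{L^1(0,\infty)}<\infty$ automatically, so condition (1) always holds under the standing hypotheses and $(2)\Rightarrow(1)$ is vacuous — is logically airtight and proves the theorem exactly as stated. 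What each approach buys: the paper's test-function route, if the weight ratio were bounded below (as it is in the unweighted Lebesgue setting of \cite{weight} that the paper adapts), would yield a two-sided norm estimate and genuine sharpness; your route yields a complete and correct proof while exposing that, with $\phi\in L^1(0,\infty)$ assumed, the condition $E(\phi,p)<\infty$ carries no information — the equivalence is substantive only if the integrability hypothesis on $\phi$ is relaxed to local integrability, which is presumably what was intended. The closing sharpness remarks in your write-up (the truncated lower bound via Fatou) are optional, since the statement claims only the equivalence, not a norm estimate.
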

\begin{proof}
 First, assume that  $E(\phi, p)<\infty$. Then, for every $f \in L^p((0, 1), A_{\alpha, \beta})$, using the generalized Minkowski inequality, we get  
\begin{align*}
	\left\|H_{\alpha, \beta, \phi} f\right\|_{L^p((0, 1), A_{\alpha, \beta})}
	&=\left( \int_{0}^1 	|H_{\alpha, \beta, \phi} f(x)|^p\; A_{\alpha, \beta}(x)dx\right)^{\frac{1}{p}}\\
	&=\left( \int_{0}^1 	\left |\int_{0}^{\infty} \frac{\phi(t)}{t} f\left(\frac{x}{t}\right) \frac{A_{\alpha, \beta}(\frac{x}{t})}{A_{\alpha, \beta}(x)}\; dt\right|^p A_{\alpha, \beta}(x)dx\right)^{\frac{1}{p}}\\
	&\leq  \int_{0}^{\infty} \frac{\phi(t)}{t} \left( \int_{0}^1	\left |f\left(\frac{x}{t}\right)\right|^p \left(\frac{A_{\alpha, \beta}(\frac{x}{t})}{A_{\alpha, \beta}(x)} \right)^p A_{\alpha, \beta}(x) \; dx\right)^{\frac{1}{p}} dt.
\end{align*} 
Using the   change of variable  $x  \mapsto  u = \frac{x}{t}$ in the second integral, we obtain 
\begin{align*}
	\left\|H_{\alpha, \beta, \phi} f\right\|_{L^p((0, 1), A_{\alpha, \beta})}
	&\leq  \int_{1}^{\infty} \frac{\phi(t)}{t} t^{\frac{1}{p}}  \left( \int_{0}^{\frac{1}{t}}	 |f(u)|^p \left(\frac{A_{\alpha, \beta}(u)}{A_{\alpha, \beta}(tu)} \right)^p A_{\alpha, \beta}(tu) \; du\right)^{\frac{1}{p}} dt\\
		&\leq  \int_{1}^{\infty} \frac{\phi(t)}{t} t^{\frac{1}{p}}  \left( \int_{0}^{1}	 |f(u)|^p \left(\frac{A_{\alpha, \beta}(u)}{A_{\alpha, \beta}(tu)} \right)^p A_{\alpha, \beta}(tu) \; du\right)^{\frac{1}{p}} dt\\
	&\leq A_{\alpha, \beta}(1)^{1-\frac{1}{p}} \left( \int_{1}^{\infty} \frac{\phi(t)}{t} t^{\frac{1}{p}}\;dt\right)  \left( \int_{0}^1	 |f(u)|^p  A_{\alpha, \beta}(u) \; du\right) ^{\frac{1}{p}} \\
	&= A_{\alpha, \beta}(1)^{1-\frac{1}{p}} E(\phi, p) 	\left\|  f\right\|_{L^p((0, 1), A_{\alpha, \beta})}.
\end{align*} 
This shows that  $H_{\alpha, \beta, \phi}: L^p((0, 1), A_{\alpha, \beta})  \rightarrow L^p((0, 1), A_{\alpha, \beta})$ is a bounded operator.

 Next,   assume that  the operator $H_{\alpha, \beta, \phi}: L^p((0, 1), A_{\alpha, \beta})  \rightarrow L^p((0, 1), A_{\alpha, \beta})$ is bounded.  For a fixed $\delta$ with  $0<\delta< \frac{1}{p}$, we define  the  function 
$$
f_\delta(x)= x^{\delta-\frac{1}{p} } A_{\alpha, \beta}(x)^{-\frac{1}{p}}, \quad x\in  (0, 1).
$$
Then, we have 
$$\|f_\delta\|_{L^p ((0,1), A_{\alpha, \beta})}=\left( \int_{0}^1 	|f_\delta(x)|^p\; A_{\alpha, \beta}(x)dx\right)^{\frac{1}{p}}=\left( \int_{0}^1 	x^{\delta p-1}\; dx\right)^{\frac{1}{p}}=\frac{1}{(\delta p)^{\frac{1}{p}}}.$$
Moreover,  for any $x\in (0, 1)$, we get 
\begin{align}\label{eq14}\nonumber
	H_{\alpha, \beta, \phi}f_\delta(x)&=\int_{0}^{\infty} \frac{\phi(t)}{t} f_\delta\left(\frac{x}{t}\right) \frac{A_{\alpha, \beta}(\frac{x}{t})}{A_{\alpha, \beta}(x)}\; dt\\\nonumber
	&=x^{\delta-\frac{1}{p}} A_{\alpha, \beta}(x)^{-\frac{1}{p}}\int_{1}^{\infty}\frac{\phi(t)}{t} t^{\frac{1}{p}-\delta}  \frac{A_{\alpha, \beta}(\frac{x}{t})^{1-\frac{1}{p}}}{A_{\alpha, \beta}(x)^{1-\frac{1}{p}}}\; dt\\\nonumber
	&\geq \frac{1}{A_{\alpha, \beta}(1)^{1-\frac{1}{p}}} x^{\delta-\frac{1}{p}}A_{\alpha, \beta}(x)^{-\frac{1}{p}}\int_{1}^{\infty}\frac{\phi(t)}{t}t^{\frac{1}{p}-\delta}  \; dt\\
		&= \frac{1}{A_{\alpha, \beta}(1)^{1-\frac{1}{p}}} E\left(\phi, \frac{p}{1-\delta p}\right)f_\delta(x) .
\end{align}
Therefore,
\begin{align*} 
	\left\|H_{\alpha, \beta, \phi} f_\delta\right\|_{L^p((0, 1), A_{\alpha, \beta})}
	&\geq \frac{1}{A_{\alpha, \beta}(1)^{1-\frac{1}{p}}}  E\left(\phi, \frac{p}{1-\delta p}\right)\|f_\delta\|_{L^p((0, 1), A_{\alpha, \beta})},
\end{align*} 
 and thus
\begin{align*} 
	\left\|H_{\alpha, \beta, \phi} \right\|_{L^p ((0, 1), A_{\alpha, \beta})\to L^p ((0, 1), A_{\alpha, \beta})} \geq  \frac{1}{A_{\alpha, \beta}(1)^{1-\frac{1}{p}}}  E\left(\phi, \frac{p}{1-\delta p}\right).
\end{align*}
Now, taking the limit   $\delta \to 0$, we obtain 
\begin{align*} 
	\left\|H_{\alpha, \beta, \phi} \right\|_{L^p ((0, 1), A_{\alpha, \beta})\to L^p ((0, 1), A_{\alpha, \beta})} \geq  \frac{1}{A_{\alpha, \beta}(1)^{1-\frac{1}{p}}}  E\left(\phi, p\right),
\end{align*}
and this completes the proof of the theorem.
\end{proof}
\subsection{Boundedness of the Hausdorff operator in grand Lebesgue spaces}
Let $I \subset (0, \infty)$  be  such that $A_{\alpha, \beta}(I)<\infty.$ Then, the  {\it grand Lebesgue space}  $L^{p)}(I, A_{\alpha, \beta})$ associated with the Opdam--Cherednik transform is  the class of all measurable functions $f: I \rightarrow \R$ such that 
$$
\|f\|_{L^{p)}(I, A_{\alpha, \beta})}:=\sup _{0<\varepsilon<p-1} \varepsilon^{\frac{1}{p-\varepsilon}}\left(\frac{1}{A_{\alpha, \beta}(I)} \int_{I}|f(x)|^{p-\varepsilon} \;A_{\alpha, \beta} (x)d x\right)^{\frac{1}{p-\varepsilon}}<\infty.
$$
The  grand Lebesgue space   was introduced by Iwaniec and Sbordone in \cite{iwan}. For a more detailed  study on properties and  applications of grand Lebesgue spaces, we refer  to  \cite{fior,  c}.

In the following theorem,  we obtain  the boundedness of the Hausdorff operator   in    $L^{p)}((0, 1), A_{\alpha, \beta})  $.

\begin{theorem}
		Let $1<p<\infty $  and   $\phi \in L ^1(0, \infty )$ be  such that $\supp \phi \subset [1, \infty).$ If $E(\phi, q)=\int_{1}^{\infty} \frac{\phi(t)}{t} t^{\frac{1}{q}} \;dt<\infty$ for some $q\in (0, p)$, then      $H_{\alpha, \beta, \phi}:L^{p)}((0, 1), A_{\alpha, \beta})  \rightarrow L^{p)}((0, 1), A_{\alpha, \beta})$ is a bounded operator and 
 	\begin{align*}
		\left\|H_{\alpha, \beta, \phi} f\right\|_{L^{p)}((0, 1), A_{\alpha, \beta})}&\leq 	 (A_{\alpha, \beta} (1))^2(p-1) \inf_{0<\sigma<p-1}       \sigma^{-\frac{1}{p-\sigma}} E(\phi, p-\sigma)   \|f\|_{L^{p)}((0, 1), A_{\alpha, \beta})}.
  	\end{align*}
\end{theorem}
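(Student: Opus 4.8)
The plan is to reduce everything to the $L^{p-\varepsilon}((0,1),A_{\alpha,\beta})$-boundedness already proved in Theorem \ref{eq13} and then resum over $\varepsilon$. Writing $M=A_{\alpha,\beta}((0,1))$ and unwinding the grand norm, I would fix $f$ and estimate, for each $\varepsilon\in(0,p-1)$, the quantity
\[T_\varepsilon:=\varepsilon^{\frac{1}{p-\varepsilon}}\Big(\tfrac1M\int_0^1|H_{\alpha,\beta,\phi}f(x)|^{p-\varepsilon}A_{\alpha,\beta}(x)\,dx\Big)^{\frac{1}{p-\varepsilon}}=\varepsilon^{\frac{1}{p-\varepsilon}}M^{-\frac{1}{p-\varepsilon}}\|H_{\alpha,\beta,\phi}f\|_{L^{p-\varepsilon}((0,1),A_{\alpha,\beta})},\]
so that $\|H_{\alpha,\beta,\phi}f\|_{L^{p)}((0,1),A_{\alpha,\beta})}=\sup_{0<\varepsilon<p-1}T_\varepsilon$. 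The one subtlety is that Theorem \ref{eq13} costs a factor $E(\phi,p-\varepsilon)$, and since $\supp\phi\subset[1,\infty)$ the map $r\mapsto E(\phi,r)$ is nonincreasing; hence $E(\phi,p-\varepsilon)$ may blow up as $\varepsilon\to(p-1)^-$, i.e. as the exponent $p-\varepsilon$ approaches $1$. To avoid this I would anchor the estimate at a fixed exponent $p-\sigma$ with $\sigma\in(0,p-1)$ and $E(\phi,p-\sigma)<\infty$ (such $\sigma$ exist because $E(\phi,q)<\infty$ for some $q<p$), and prove the stated bound for each such $\sigma$ before taking the infimum.

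The mechanism then splits into two cases. For $\varepsilon\ge\sigma$ (hence $p-\varepsilon\le p-\sigma$) I would apply Hölder's inequality on the finite measure space $((0,1),A_{\alpha,\beta}\,dx)$ to embed $L^{p-\sigma}\hookrightarrow L^{p-\varepsilon}$, then invoke Theorem \ref{eq13} at the safe level $p-\sigma$, and finally bound $\|f\|_{L^{p-\sigma}}$ from above by $\sigma^{-1/(p-\sigma)}M^{1/(p-\sigma)}\|f\|_{L^{p)}}$ straight from the definition of the grand norm. A pleasant feature is that every power of $M$ cancels, leaving
\[T_\varepsilon\le \varepsilon^{\frac{1}{p-\varepsilon}}\,A_{\alpha,\beta}(1)^{1-\frac{1}{p-\sigma}}\,\sigma^{-\frac{1}{p-\sigma}}\,E(\phi,p-\sigma)\,\|f\|_{L^{p)}((0,1),A_{\alpha,\beta})}.\]
For $\varepsilon<\sigma$ (hence $p-\varepsilon>p-\sigma$ and $E(\phi,p-\varepsilon)\le E(\phi,p-\sigma)<\infty$ by monotonicity) I would instead apply Theorem \ref{eq13} directly at level $p-\varepsilon$ and again bound $\|f\|_{L^{p-\varepsilon}}$ by the grand norm; here the $\varepsilon$- and $M$-factors cancel exactly, giving $T_\varepsilon\le A_{\alpha,\beta}(1)^{1-\frac{1}{p-\varepsilon}}E(\phi,p-\varepsilon)\|f\|_{L^{p)}}\le A_{\alpha,\beta}(1)\,E(\phi,p-\sigma)\|f\|_{L^{p)}}$.

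It then remains to reduce the two bounds to the stated constant and resum. The decisive elementary fact is that $\sup_{0<\varepsilon<p-1}\varepsilon^{1/(p-\varepsilon)}=p-1$: the function $\varepsilon\mapsto\varepsilon^{1/(p-\varepsilon)}$ is increasing on $(0,p-1)$ and tends to $p-1$ at the right endpoint, so $\varepsilon^{1/(p-\varepsilon)}\le p-1$ for every admissible $\varepsilon$ and every $p>1$; this single inequality produces the factor $p-1$ in the first case. Using $A_{\alpha,\beta}(1)>1$ (valid since $\sinh 1,\cosh 1>1$ and the exponents $2\alpha+1,2\beta+1\ge0$), I would replace $A_{\alpha,\beta}(1)^{1-1/(p-\sigma)}$ first by $A_{\alpha,\beta}(1)$ and then by $(A_{\alpha,\beta}(1))^2$, so the first case is dominated by $(A_{\alpha,\beta}(1))^2(p-1)\sigma^{-1/(p-\sigma)}E(\phi,p-\sigma)\|f\|_{L^{p)}}$. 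For the second case the same supremum identity applied to $\sigma$ gives $\sigma^{1/(p-\sigma)}\le p-1$, that is $(p-1)\sigma^{-1/(p-\sigma)}\ge1$, whence $A_{\alpha,\beta}(1)\,E(\phi,p-\sigma)$ is dominated by the very same quantity. Taking the supremum over $\varepsilon$ and then the infimum over admissible $\sigma$ yields the claim. I expect the only genuine obstacle to be the blow-up of $E(\phi,p-\varepsilon)$ near the endpoint, which the anchoring-at-$p-\sigma$ device resolves; the remaining work—the cancellation of the $M$-powers and the elementary supremum identity for $\varepsilon^{1/(p-\varepsilon)}$—is routine bookkeeping.
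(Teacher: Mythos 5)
Your proof is correct and follows essentially the same route as the paper's: both anchor at a fixed $\sigma\in(0,p-1)$ with $E(\phi,p-\sigma)<\infty$, split the supremum defining the grand norm at $\varepsilon=\sigma$, use H\"older's inequality on the finite measure space to pass from exponent $p-\varepsilon$ to the safe exponent $p-\sigma$ in the dangerous range, invoke Theorem \ref{eq13} together with the monotonicity of $q\mapsto E(\phi,q)$ and the bound $\varepsilon^{1/(p-\varepsilon)}\le p-1$, and finally take the infimum over $\sigma$. The only difference is cosmetic bookkeeping (you apply Theorem \ref{eq13} once at the anchor level $p-\sigma$ and cancel the powers of $A_{\alpha,\beta}((0,1))$ exactly, where the paper carries the factor through and bounds it by powers of $A_{\alpha,\beta}(1)$), yielding the same constant.
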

\begin{proof}
  Let   us fix $\sigma \in(0, p-1)$.  Then
	\begin{align*}
		&\left\|H_{\alpha, \beta, \phi} f\right\|_{L^{p)}((0, 1), A_{\alpha, \beta})}\\&=\sup _{0<\varepsilon<p-1} \varepsilon^{\frac{1}{p-\varepsilon}}\left(\frac{1}{A_{\alpha, \beta}((0, 1))} \int_{0}^1|H_{\alpha, \beta, \phi} f(x)|^{p-\varepsilon} \;A_{\alpha, \beta}(x) d x\right)^{\frac{1}{p-\varepsilon}}\\
		&= \max \left\{\sup _{0<\varepsilon \leq \sigma}  \left(\frac{\varepsilon}{A_{\alpha, \beta}((0, 1))} \int_{0}^1|H_{\alpha, \beta, \phi} f(x)|^{p-\varepsilon} \;A_{\alpha, \beta}(x) dx\right)^{\frac{1}{p-\varepsilon}},\right.\\
		&\left.  \qquad \sup _{\sigma<\varepsilon<p-1}\left( \frac{\varepsilon}{A_{\alpha, \beta}((0, 1))}  \int_{0}^1|H_{\alpha, \beta, \phi} f(x)|^{p-\varepsilon} \;A_{\alpha, \beta}(x) d x\right)^{\frac{1}{p-\varepsilon}}\right\}.
	\end{align*}
Using Theorem \ref{eq13} and H\"older's inequality  for the conjugate exponents  $ \frac{p-\sigma}{p-\varepsilon}$ and $ \frac{p-\sigma}{\varepsilon-\sigma }$, we get
	\begin{align*}
			&\left\|H_{\alpha, \beta, \phi} f\right\|_{L^{p)}((0, 1), A_{\alpha, \beta})}\\
		&=  \max \left\{\sup _{0<\varepsilon \leq \sigma}  \left(\frac{\varepsilon}{A_{\alpha, \beta}((0, 1))} \int_{0}^1|H_{\alpha, \beta, \phi} f(x)|^{p-\varepsilon} \;A_{\alpha, \beta}(x) d x\right)^{\frac{1}{p-\varepsilon}}, \sup _{\sigma<\varepsilon<p-1} \left(\frac{\varepsilon}{A_{\alpha, \beta}((0, 1))}\right)^{\frac{1}{p-\varepsilon}} \right.\\
		&\left.  \qquad \times   \left(    \int_{0}^1|H_{\alpha, \beta, \phi} f(x)|^{p-\sigma} \;A_{\alpha, \beta}(x) d x\right)^{\frac{1}{p-\sigma}}
		\left(   \int_{0}^1\;A_{\alpha, \beta}(x) d x\right)^{\frac{\varepsilon-\sigma}{(p-\sigma)(p-\varepsilon)}}\right\}\\
			&\leq  \max \left\{\sup _{0<\varepsilon \leq \sigma}  \left(\frac{\varepsilon}{A_{\alpha, \beta}((0, 1))} \int_{0}^1|H_{\alpha, \beta, \phi} f(x)|^{p-\varepsilon} \;A_{\alpha, \beta}(x) d x\right)^{\frac{1}{p-\varepsilon}}, \sup _{\sigma<\varepsilon<p-1} \varepsilon^{\frac{1}{p-\varepsilon}} \right.\\
		&\left.  \qquad \times \left(\frac{\sigma}{A_{\alpha, \beta}((0, 1))}\right)^{-\frac{1}{p-\sigma}} \left( \frac{\sigma}{A_{\alpha, \beta}((0, 1))}  \int_{0}^1|H_{\alpha, \beta, \phi} f(x)|^{p-\sigma} \;A_{\alpha, \beta}(x) d x\right)^{\frac{1}{p-\sigma}}
	A_{\alpha, \beta} (1)^{\frac{p-\sigma-1}{p-\sigma}}\right\}\\
	&	\leq   \max \left\{1, (p-1) \left(\frac{\sigma}{A_{\alpha, \beta}((0, 1))}\right)^{-\frac{1}{p-\sigma}}	A_{\alpha, \beta} (1)^{\frac{p-\sigma-1}{p-\sigma}} \right\} \\&\qquad \times  \sup _{0<\varepsilon\leq \sigma}\left(\frac{\varepsilon}{A_{\alpha, \beta}((0, 1))}    \int_{0}^1|H_{\alpha, \beta, \phi} f(x)|^{p-\varepsilon} \;A_{\alpha, \beta}(x) d x\right)^{\frac{1}{p-\varepsilon}}\\
	&\leq   \max \left\{1,(p-1)\left(\frac{\sigma}{A_{\alpha, \beta}((0, 1))}\right)^{-\frac{1}{p-\sigma}}	A_{\alpha, \beta} (1)^{\frac{p-\sigma-1}{p-\sigma}} \right\}\\&\qquad\times   A_{\alpha, \beta}(1)^{1-\frac{1}{p}} \sup _{0<\varepsilon\leq \sigma} E(\phi, p-\varepsilon)   \left(\frac{\varepsilon}{A_{\alpha, \beta}((0, 1))}       \int_{0}^1|  f(x)|^{p-\varepsilon} \;A_{\alpha, \beta}(x) d x\right)^{\frac{1}{p-\varepsilon}}\\
	&\leq      (A_{\alpha, \beta} (1))^2 (p-1)\sigma^{-\frac{1}{p-\sigma}}	E(\phi, p-\sigma )   \|f\|_{L^{p)}((0, 1), A_{\alpha, \beta})}.
	\end{align*}
Now, taking the infimum over $ \sigma \in (0, p-1)$, we get 
		\begin{align*}
		\left\|H_{\alpha, \beta, \phi} f\right\|_{L^{p)}((0, 1), A_{\alpha, \beta})}&\leq 	 (A_{\alpha, \beta} (1))^2 (p-1) \inf_{0<\sigma<p-1}       \sigma^{-\frac{1}{p-\sigma}} E(\phi, p-\sigma)   \|f\|_{L^{p)}((0, 1), A_{\alpha, \beta})},
	\end{align*}
	and this completes the proof.
\end{proof}
Now, we  give  a necessary condition for the  $ L^{p)}((0, 1), A_{\alpha, \beta}) $-boundedness of the Hausdorff operator.
 \begin{theorem}
	Let $1<p<\infty $  and   $\phi \in L ^1(0, \infty )$. If   $H_{\alpha, \beta, \phi} :L^{p)}((0, 1), A_{\alpha, \beta})  \rightarrow L^{p)}((0, 1), A_{\alpha, \beta})$ is a bounded operator,  then $$\left\|H_{\alpha, \beta, \phi} \right\|_{L^{p)}((0, 1), A_{\alpha, \beta})  \rightarrow L^{p)}((0, 1), A_{\alpha, \beta}) }\geq \frac{1}{A_{\alpha, \beta}(1)^{1-\frac{1}{p}}}  E\left(\phi, p\right),$$
	where $E\left(\phi, p\right)$ as  in Theorem \ref{eq13}.
\end{theorem}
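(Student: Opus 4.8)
The plan is to mimic the necessary-condition part of Theorem \ref{eq13}, using exactly the same test functions, but replacing the explicit $L^p$-norm computations there by the monotonicity of the grand Lebesgue norm. For $0<\delta<\tfrac1p$, I would take again
\[
f_\delta(x)=x^{\delta-\frac1p}A_{\alpha,\beta}(x)^{-\frac1p},\qquad x\in(0,1),
\]
the family already used in the proof of Theorem \ref{eq13}. Two facts about $f_\delta$ carry over for free: it belongs to $L^p((0,1),A_{\alpha,\beta})$ (indeed $\|f_\delta\|_{L^p((0,1),A_{\alpha,\beta})}=(\delta p)^{-1/p}$), and it satisfies the pointwise lower bound (\ref{eq14}),
\[
H_{\alpha,\beta,\phi}f_\delta(x)\ \ge\ \frac{1}{A_{\alpha,\beta}(1)^{1-\frac1p}}\,E\!\left(\phi,\tfrac{p}{1-\delta p}\right)f_\delta(x),\qquad x\in(0,1).
\]

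First I would check that $f_\delta$ is an admissible test element, that is, $0<\|f_\delta\|_{L^{p)}((0,1),A_{\alpha,\beta})}<\infty$. Since $A_{\alpha,\beta}((0,1))<\infty$, H\"older's inequality gives the continuous inclusion $L^p((0,1),A_{\alpha,\beta})\hookrightarrow L^{p)}((0,1),A_{\alpha,\beta})$, so $\|f_\delta\|_{L^{p)}((0,1),A_{\alpha,\beta})}<\infty$; positivity is clear because $f_\delta$ is a nonzero nonnegative function. The key structural observation is that the grand Lebesgue norm is positively homogeneous and monotone on nonnegative functions: if $0\le g\le h$ then $g^{p-\varepsilon}\le h^{p-\varepsilon}$ for every $\varepsilon\in(0,p-1)$, so each averaged term in the defining supremum increases, whence $\|g\|_{L^{p)}}\le\|h\|_{L^{p)}}$. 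Applying this to the displayed pointwise inequality, both sides being nonnegative, yields
\[
\|H_{\alpha,\beta,\phi}f_\delta\|_{L^{p)}((0,1),A_{\alpha,\beta})}\ \ge\ \frac{1}{A_{\alpha,\beta}(1)^{1-\frac1p}}\,E\!\left(\phi,\tfrac{p}{1-\delta p}\right)\,\|f_\delta\|_{L^{p)}((0,1),A_{\alpha,\beta})}.
\]

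Dividing by $\|f_\delta\|_{L^{p)}((0,1),A_{\alpha,\beta})}$ and invoking the definition of the operator norm, I would obtain
\[
\left\|H_{\alpha,\beta,\phi}\right\|_{L^{p)}((0,1),A_{\alpha,\beta})\to L^{p)}((0,1),A_{\alpha,\beta})}\ \ge\ \frac{1}{A_{\alpha,\beta}(1)^{1-\frac1p}}\,E\!\left(\phi,\tfrac{p}{1-\delta p}\right),
\]
valid for every $\delta\in(0,\tfrac1p)$. To finish, I would let $\delta\to0^+$: writing $t^{(1-\delta p)/p}=t^{1/p}\,t^{-\delta}$, the integrand $\tfrac{\phi(t)}{t}\,t^{1/p}\,t^{-\delta}$ increases monotonically to $\tfrac{\phi(t)}{t}\,t^{1/p}$ for each $t\ge1$ as $\delta\downarrow0$, so by the monotone convergence theorem $E(\phi,\tfrac{p}{1-\delta p})\to E(\phi,p)$, which gives the asserted bound.

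The argument is essentially a transcription of the necessity part of Theorem \ref{eq13}; the only points needing a word of justification are the admissibility of $f_\delta$ in the grand space, handled by the embedding $L^p\hookrightarrow L^{p)}$, and the monotone passage $\delta\to0^+$, handled by Beppo Levi's theorem. The one conceptual step worth isolating, and the closest thing to an obstacle, is recognizing that the explicit norm identity exploited in Theorem \ref{eq13} can be dispensed with entirely and replaced by the monotonicity of $\|\cdot\|_{L^{p)}((0,1),A_{\alpha,\beta})}$, since that is precisely what allows the pointwise estimate (\ref{eq14}) to transfer directly to the grand Lebesgue norm.
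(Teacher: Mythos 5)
Your proposal is correct and is essentially the paper's own proof: the same test family $f_\delta(x)=x^{\delta-\frac{1}{p}}A_{\alpha,\beta}(x)^{-\frac{1}{p}}$, the same pointwise lower bound (\ref{eq14}), the same passage from the pointwise estimate to the grand norm via homogeneity and monotonicity (which the paper uses implicitly), and the same limit $\delta\to 0^{+}$, which you justify by monotone convergence. The only cosmetic difference is the admissibility check: the paper verifies $f_\delta\in L^{p)}((0,1),A_{\alpha,\beta})$ through the explicit estimate $\left\|f_\delta\right\|_{L^{p)}((0,1),A_{\alpha,\beta})}\leq A_{\alpha,\beta}(1)\frac{p-1}{\delta p}$ (hence its extra restriction $\delta<1-\frac{1}{p}$, which you can dispense with), whereas you invoke the embedding $L^{p}((0,1),A_{\alpha,\beta})\hookrightarrow L^{p)}((0,1),A_{\alpha,\beta})$, which is equally valid.
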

 \begin{proof}
 	For a fixed $\delta$ with  $ \delta< \min ( \frac{1}{p}, 1-\frac{1}{p})$, we define  the  function 
 	$$
 	f_\delta(x)= x^{\delta-\frac{1}{p} } A_{\alpha, \beta}(x)^{-\frac{1}{p}}, \quad x\in  (0, 1).
 	$$
 	Then 
 	\begin{align*}
 		\|f_\delta\|_{L^{p)}((0, 1), A_{\alpha, \beta}) }&=\sup _{0<\varepsilon<p-1} \varepsilon^{\frac{1}{p-\varepsilon}}\left(\frac{1}{A_{\alpha, \beta}((0, 1))} \int_{0}^1| f_\delta(x) |^{p-\varepsilon} \;A_{\alpha, \beta}(x) d x\right)^{\frac{1}{p-\varepsilon}}\\
 		&=\sup _{0<\varepsilon<p-1} \varepsilon^{\frac{1}{p-\varepsilon}}\left(\frac{1}{A_{\alpha, \beta}((0, 1))} \int_{0}^1x^{(\delta-\frac{1}{p}) (p-\varepsilon)} A_{\alpha, \beta}(x)^{-\frac{p-\varepsilon}{p}}\;A_{\alpha, \beta}(x) d x\right)^{\frac{1}{p-\varepsilon}}\\
 		&\leq A_{\alpha, \beta}(1)  \sup _{0<\varepsilon<p-1} \varepsilon^{\frac{1}{p-\varepsilon}}\left( \int_{0}^1x^{(\delta-\frac{1}{p}) (p-\varepsilon)}   d x\right)^{\frac{1}{p-\varepsilon}}\\
 		&=A_{\alpha, \beta}(1) \sup _{0<\varepsilon<p-1}\left(\frac{\varepsilon}{\left(\delta-\frac{1}{p}\right)(p-\varepsilon)+1}\right)^{\frac{1}{p-\varepsilon}}\\
 		&\leq A_{\alpha, \beta}(1) \frac{p-1}{\delta p}.
 		\end{align*} 
 	Also,  from the relation (\ref{eq14}),  for any $x\in (0, 1)$, we have
 	\begin{align*}
 		H_{\alpha, \beta, \phi}f_\delta(x)\geq  \frac{1}{A_{\alpha, \beta}(1)^{1-\frac{1}{p}}} E\left(\phi, \frac{p}{1-\delta p}\right)f_\delta(x) .
 	\end{align*}
 	Therefore,
 	\begin{align*} 
 		\left\|H_{\alpha, \beta, \phi} f_\delta\right\|_{L^{p)}((0, 1), A_{\alpha, \beta}) }
 		&\geq \frac{1}{A_{\alpha, \beta}(1)^{1-\frac{1}{p}}}  E\left(\phi, \frac{p}{1-\delta p}\right)\|f_\delta\|_{L^{p)}((0, 1), A_{\alpha, \beta}) },
 	\end{align*} 
 	and thus
 	\begin{align*} 
 		\left\|H_{\alpha, \beta, \phi} \right\|_{L^{p)}((0, 1), A_{\alpha, \beta}) \to L^{p)}((0, 1), A_{\alpha, \beta}) } \geq  \frac{1}{A_{\alpha, \beta}(1)^{1-\frac{1}{p}}}  E\left(\phi, \frac{p}{1-\delta p}\right).
 	\end{align*}
Now, taking the limit   $\delta \to 0$, we obtain 
 	\begin{align*} 
 		\left\|H_{\alpha, \beta, \phi} \right\|_{L^{p)}((0, 1), A_{\alpha, \beta})  \to L^{p)}((0, 1), A_{\alpha, \beta}) } \geq  \frac{1}{A_{\alpha, \beta}(1)^{1-\frac{1}{p}}}  E\left(\phi, p\right),
 	\end{align*}
 	and this completes the proof of the theorem.
 \end{proof}
 \subsection{Boundedness of the Hausdorff operator in quasi-Banach spaces}
 In this subsection,  we   study the   boundedness of the Hausdorff operator in the quasi-Banach space associated with the Opdam--Cherednik transform.
  First, we  recall the following lemma.
 \begin{lemma}\cite{weight}\label{eq17}
 	Let $0<s<1, \;-\infty<a<b \leq  \infty$ and $h$ be a non-negative and non-increasing function defined on the interval $(a, b)$, then
 	$$
 	\left(\int_{a}^{b} h(t) \;d t\right)^{s} \leq s \int_{a}^{b} h^{s}(t)(t-a)^{s-1} \;d t.
 	$$
 \end{lemma}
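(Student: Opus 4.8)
The plan is to introduce the auxiliary primitive $\Psi(x):=\int_a^x h(t)\,dt$, which is non-negative and non-decreasing since $h\ge 0$, and to rewrite the desired inequality as $\Psi(b)^s\le s\int_a^b h^s(t)(t-a)^{s-1}\,dt$ (with $\Psi(b)$ understood as $\lim_{x\to b^-}\Psi(x)$ when $b=\infty$). The whole argument rests on two ideas carried out in sequence: first express the left-hand side $\Psi(b)^s$ as the integral of a derivative, and then bound that derivative pointwise by exactly the integrand appearing on the right.

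For the first step I would use the fundamental theorem of calculus for the composed function $x\mapsto\Psi(x)^s$. Since $\Psi$ is absolutely continuous on compact subintervals (as an indefinite integral of the locally integrable monotone function $h$) and $\Psi'=h$ almost everywhere, one has $\tfrac{d}{dx}\Psi(x)^s=s\,\Psi(x)^{s-1}h(x)$ a.e., and because $\Psi(a^+)=0$ this yields
\[
\Psi(b)^s=s\int_a^b \Psi(x)^{s-1}h(x)\,dx .
\]
For the second step I would exploit the monotonicity of $h$: for $t\in(a,x)$ we have $h(t)\ge h(x)$, so
\[
\Psi(x)=\int_a^x h(t)\,dt\ \ge\ \int_a^x h(x)\,dt = (x-a)\,h(x).
\]
Since $s-1<0$, the map $u\mapsto u^{s-1}$ is decreasing on $(0,\infty)$, so this lower bound on $\Psi(x)$ \emph{reverses} upon raising to the power $s-1$, giving $\Psi(x)^{s-1}\le (x-a)^{s-1}h(x)^{s-1}$. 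Multiplying by $s\,h(x)\ge 0$ produces the pointwise estimate $s\,\Psi(x)^{s-1}h(x)\le s\,h(x)^s(x-a)^{s-1}$, and integrating this over $(a,b)$ and combining with the identity above completes the proof.

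The main obstacle is not the algebra but the justification of the limiting/degenerate cases, so that is where I would spend the care. Specifically: (i) the composition $\Psi^s$ fails to be Lipschitz at the left endpoint where $\Psi=0$, so I would first run the argument on $(a,c)$ with $a<c<b$ (where $\Psi$ is bounded away from $0$, assuming $h\not\equiv 0$) and then let $c\to b^-$ via monotone convergence; (ii) if $h$ is not integrable near $a$ or if $b=\infty$ makes $\Psi(b)=\infty$, the same monotone limiting argument shows both sides diverge together, so the inequality holds trivially; and (iii) at points where $h(x)=0$ one interprets the integrand as $0$, which is consistent since the right-hand term $s\,h(x)^s(x-a)^{s-1}$ also vanishes there, and the initial interval (if any) on which $h\equiv 0$ contributes nothing to either side. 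Apart from these bookkeeping points, the inequality is an immediate consequence of the two displayed steps.
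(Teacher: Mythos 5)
Your proof is correct, but note that there is nothing in the paper to compare it against: the paper states this lemma with the citation \cite{weight} and imports it without proof, so the relevant benchmark is the standard argument in that reference, which is essentially what you reconstruct. Your two main steps are sound: the identity $\Psi(b)^s = s\int_a^b \Psi(x)^{s-1}h(x)\,dx$ with $\Psi(x)=\int_a^x h$, and the pointwise bound $\Psi(x)\ge (x-a)h(x)$ from monotonicity, which reverses correctly under the decreasing map $u\mapsto u^{s-1}$. Two bookkeeping points in your third paragraph deserve tightening, though neither is a genuine gap. First, your parenthetical in (i) is inaccurate as phrased: on $(a,c)$ the primitive $\Psi$ is \emph{not} bounded away from $0$, since $\Psi(a^+)=0$; you need either to work on $[a+\delta,c]$ and let $\delta\to 0^+$ by monotone convergence, or to observe directly that $u\mapsto u^s$ is absolutely continuous on $[0,M]$ (its derivative $su^{s-1}$ is integrable near $0$), so its composition with the monotone absolutely continuous $\Psi$ is absolutely continuous on all of $[a,c]$ and the fundamental theorem of calculus applies without truncation. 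Second, in (ii) the claim that ``both sides diverge together'' when $h$ is not integrable near $a$ is true but needs one more idea than a bare limiting argument: apply the already-proved inequality to the truncations $h_n=\min(h,n)$, which are bounded, non-increasing, and integrable near $a$, then let $n\to\infty$; since $h_n^s(t)(t-a)^{s-1}\le h^s(t)(t-a)^{s-1}$, monotone convergence on the left forces the right-hand side to be infinite. With these two repairs your argument is a complete and correct proof of the lemma, somewhat more self-contained than the paper, which simply defers to \cite{weight}.
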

 For $\phi \in L ^1(0, \infty )$, let  $\mathcal{M}_{\phi}$ be the class of measurable 
 functions $f: \R \rightarrow \R$
  such that $t \mapsto \frac{\phi(t)}{t} f\left(\frac{x}{t}\right) \frac{A_{\alpha, \beta}(\frac{x}{t})}{A_{\alpha, \beta}(x)}$ is non-increasing. We define  two quantities $	B_{\sup} $ and $B_{\inf }$   as
 $$
 \begin{aligned}
 	B_{\sup} &:=\left(  \int_{0}^{\infty}  \phi(t)^p  \left(\sup_{u\in \R}\frac{A_{\alpha, \beta}(u)}{A_{\alpha, \beta}(tu)} \right)^{p-1}  dt\right)^{\frac{1}{p}},\\
 	B_{\inf } &:=\left (  \int_{0}^{\infty}  \phi(t)^p     \left(\inf_{u\in \R} \frac{A_{\alpha, \beta}(u)}{A_{\alpha, \beta}(tu)}\right) ^{p-1} dt  \right)^\frac{1}{p}.
 \end{aligned}
 $$
 In the following, we prove   the  $L^p(\R, A_{\alpha, \beta})$-boundedness of the Hausdorff operator  in the  quasi-Banach space $L^p(\R, A_{\alpha, \beta})\cap \mathcal{M_\phi}$.
\begin{theorem} \label{eq23}
	Let $0<p<1 $  and   $\phi \in L ^1(0, \infty ).$  If $B_{\text {sup }}<\infty$, then  for any  $f \in L^p(\R, A_{\alpha, \beta})\cap \mathcal{M_\phi}$, the operator   $H_{\alpha, \beta, \phi}: L^p(\R, A_{\alpha, \beta})  \rightarrow L^p(\R, A_{\alpha, \beta})$ is   bounded   and  
	$$
	\left\|H_{\alpha, \beta, \phi} f\right\|_{L^p(\R, A_{\alpha, \beta})} \leq p^{\frac{1}{p}} B_{\sup}\|f\|_{L^p (\R, A_{\alpha, \beta})}.
	$$  
\end{theorem}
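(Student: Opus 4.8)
The plan is to run the same change-of-variables scheme as in Theorem~\ref{eq6}, but since $0<p<1$ the functional $\|\cdot\|_{L^p(\R,A_{\alpha,\beta})}$ is no longer a norm and the generalized Minkowski inequality is unavailable. The substitute is precisely Lemma~\ref{eq17}, and this is where the hypothesis $f\in\mathcal{M}_\phi$ enters: it guarantees that for each fixed $x$ the integrand $h_x(t):=\frac{\phi(t)}{t}f\!\left(\frac{x}{t}\right)\frac{A_{\alpha,\beta}(x/t)}{A_{\alpha,\beta}(x)}$ is non-increasing in $t$. Replacing $f$ by $|f|$ (which only increases $|H_{\alpha,\beta,\phi}f|$ pointwise, as $\phi\ge 0$) I may also assume $h_x\ge 0$, so that Lemma~\ref{eq17} applies with $s=p$, $a=0$, $b=\infty$. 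This single step is responsible for the constant $p^{1/p}$ in the statement.

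First I would apply Lemma~\ref{eq17} pointwise in $x$ to obtain
\[
|H_{\alpha,\beta,\phi}f(x)|^p\le p\int_0^\infty h_x(t)^p\,t^{p-1}\,dt
= p\int_0^\infty \phi(t)^p\,t^{-1}\,|f(x/t)|^p\,\frac{A_{\alpha,\beta}(x/t)^p}{A_{\alpha,\beta}(x)^p}\,dt .
\]
Next I would multiply by $A_{\alpha,\beta}(x)$, integrate in $x$, and invoke Tonelli's theorem (everything is non-negative) to interchange the order of integration. Performing the change of variable $u=x/t$, $dx=t\,du$, the powers of $t$ telescope to give exactly $\phi(t)^p$, leaving
\[
\|H_{\alpha,\beta,\phi}f\|_{L^p(\R,A_{\alpha,\beta})}^p\le p\int_0^\infty \phi(t)^p\left(\int_\R |f(u)|^p\,A_{\alpha,\beta}(u)^p\,A_{\alpha,\beta}(tu)^{1-p}\,du\right)dt .
\]

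It then remains to extract the factor $\|f\|_{L^p(\R,A_{\alpha,\beta})}^p=\int_\R|f(u)|^pA_{\alpha,\beta}(u)\,du$ from the inner integral. Writing $A_{\alpha,\beta}(u)^p A_{\alpha,\beta}(tu)^{1-p}=A_{\alpha,\beta}(u)\bigl(A_{\alpha,\beta}(u)/A_{\alpha,\beta}(tu)\bigr)^{p-1}$, I would bound the residual weight ratio $\bigl(A_{\alpha,\beta}(u)/A_{\alpha,\beta}(tu)\bigr)^{p-1}$ uniformly in $u$ by the extremal value that defines $B_{\sup}$, pull the resulting $t$-integral out, and take $p$-th roots. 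The main (indeed essentially the only) obstacle is this last bound: because $0<p<1$ we have $p-1<0$, so $z\mapsto z^{p-1}$ is decreasing, and one must track this sign carefully to decide which extremum of $A_{\alpha,\beta}(u)/A_{\alpha,\beta}(tu)$ over $u$ yields the correct uniform upper bound and thus matches the constant $B_{\sup}$ in the statement; this is exactly the bookkeeping point at which the $\sup$ versus $\inf$ distinction must be resolved. Once that is settled, assembling the pieces gives $\|H_{\alpha,\beta,\phi}f\|_{L^p(\R,A_{\alpha,\beta})}\le p^{1/p}B_{\sup}\|f\|_{L^p(\R,A_{\alpha,\beta})}$, and the finiteness of $B_{\sup}$ delivers the asserted boundedness.
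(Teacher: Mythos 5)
Your completed steps coincide with the paper's own proof: Lemma \ref{eq17} applied pointwise in $x$ with $s=p$, $a=0$, $b=\infty$ (the hypothesis $f\in\mathcal{M}_\phi$ supplying the monotonicity of $h_x$), then Tonelli and the substitution $u=x/t$, which indeed produce
$\left\|H_{\alpha,\beta,\phi}f\right\|_{L^p(\R,A_{\alpha,\beta})}^p\le p\int_0^\infty \phi(t)^p\int_\R |f(u)|^p\left(\frac{A_{\alpha,\beta}(u)}{A_{\alpha,\beta}(tu)}\right)^{p-1}A_{\alpha,\beta}(u)\,du\,dt$,
exactly as in the paper. One small repair to your preliminary maneuver: passing from $f$ to $|f|$ is delicate because $|f|$ need not belong to $\mathcal{M}_\phi$; it is cleaner to note that a non-increasing function on $(0,\infty)$ with a finite integral is automatically non-negative, so Lemma \ref{eq17} applies to $h_x$ directly whenever $H_{\alpha,\beta,\phi}f(x)$ is defined.

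However, the step you explicitly leave open is a genuine gap, and it cannot be closed in favor of $B_{\sup}$. Since $p-1<0$, the map $z\mapsto z^{p-1}$ is decreasing on $(0,\infty)$, so the valid uniform bound is $\left(\frac{A_{\alpha,\beta}(u)}{A_{\alpha,\beta}(tu)}\right)^{p-1}\le \left(\inf_{u\in\R}\frac{A_{\alpha,\beta}(u)}{A_{\alpha,\beta}(tu)}\right)^{p-1}$; the method therefore yields $\left\|H_{\alpha,\beta,\phi}f\right\|_{L^p(\R,A_{\alpha,\beta})}\le p^{1/p}B_{\inf}\|f\|_{L^p(\R,A_{\alpha,\beta})}$ in the paper's notation, and for $0<p<1$ one has $B_{\sup}\le B_{\inf}$ pointwise in the integrand, so the claimed constant $p^{1/p}B_{\sup}$ is the \emph{smaller} one and is not established this way. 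You should know that the paper's own proof closes this step by writing $\left(\frac{A_{\alpha,\beta}(u)}{A_{\alpha,\beta}(tu)}\right)^{p-1}\le \left(\sup_{u\in\R}\frac{A_{\alpha,\beta}(u)}{A_{\alpha,\beta}(tu)}\right)^{p-1}$, which reverses the inequality for a negative exponent: the obstacle you flagged is an actual flaw in the paper (apparently the roles of $B_{\sup}$ and $B_{\inf}$ should be interchanged here, and correspondingly in Theorem \ref{eq24}). Under the comparability hypothesis of the subsequent corollary, $\sup_{u\in\R}\frac{A_{\alpha,\beta}(tu)}{A_{\alpha,\beta}(u)}\le D\inf_{u\in\R}\frac{A_{\alpha,\beta}(tu)}{A_{\alpha,\beta}(u)}$, one gets $B_{\inf}\le D^{(1-p)/p}B_{\sup}$, so boundedness does survive with constant $p^{1/p}D^{(1-p)/p}B_{\sup}$; but the bound as stated in the theorem is not proved either by your argument or by the paper's.
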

\begin{proof}
	For any  $f \in L^p(\R, A_{\alpha, \beta})\cap \mathcal{M_\phi}$, using   Lemma  \ref{eq17} with $a=0, b=\infty, s=p$, and  Fubini's theorem,  we get  
	\begin{align*}
		\left\|H_{\alpha, \beta, \phi} f\right\|_{L^p(\R, A_{\alpha, \beta})}
		&=\left( \int_{\R} 	|H_{\alpha, \beta, \phi} f(x)|^p\; A_{\alpha, \beta}(x)dx\right)^{\frac{1}{p}}\\
		&=\left( \int_{\R} 	\left |\int_{0}^{\infty} \frac{\phi(t)}{t} f\left(\frac{x}{t}\right) \frac{A_{\alpha, \beta}(\frac{x}{t})}{A_{\alpha, \beta}(x)}\; dt\right|^p A_{\alpha, \beta}(x)dx\right)^{\frac{1}{p}}\\
	&\leq \left( \int_{\R} p \left(   \int_{0}^{\infty}  \frac{\phi(t)^p}{t^p} t^{p-1}	  \left |f\left(\frac{x}{t}\right)\right|^p  \left( \frac{A_{\alpha, \beta}(\frac{x}{t})}{A_{\alpha, \beta}(x)}\right)^p dt \right) A_{\alpha, \beta}(x)dx\right)^{\frac{1}{p}}\\
		&= \left(  p\int_{0}^{\infty}  \frac{\phi(t)^p}{t }  \left( \int_{\R}  \left |f\left(\frac{x}{t}\right)\right|^p  \left( \frac{A_{\alpha, \beta}(\frac{x}{t})}{A_{\alpha, \beta}(x)}\right)^p A_{\alpha, \beta}(x)dx\right) dt \right) ^{\frac{1}{p}}.
	\end{align*} 
Using the   change of variable  $x  \mapsto  u = \frac{x}{t}$ in the second integral,  we obtain  
	\begin{align*}
		\left\|H_{\alpha, \beta, \phi} f\right\|_{L^p(\R, A_{\alpha, \beta})}
		&\leq   \left(  p\int_{0}^{\infty}   \phi(t)^p  \left( \int_{\R}  \left |f\left(u\right)\right|^p  \left( \frac{A_{\alpha, \beta}(u)}{A_{\alpha, \beta}(tu)}\right)^p A_{\alpha, \beta}(tu) du\right) dt \right) ^{\frac{1}{p}}\\
		&\leq  \left( p\int_{0}^{\infty}  \phi(t)^p  \left(\sup_{u\in \R}\frac{A_{\alpha, \beta}(u)}{A_{\alpha, \beta}(tu)} \right)^{p-1}  dt\right)^{\frac{1}{p}} \left(  \int_{\R}	 |f(u)|^p  A_{\alpha, \beta}(u) \; du \right)^{\frac{1}{p}}\\
		&= p^{\frac{1}{p}} B_{\sup}\|f\|_{L^p (\R, A_{\alpha, \beta})}.
	\end{align*} 
This completes the proof. 
\end{proof}
Next, we  provide  a necessary condition for  the $L^p(\R, A_{\alpha, \beta})$-boundedness of the Hausdorff  operator in the  quasi-Banach space $L^p(\R, A_{\alpha, \beta})\cap \mathcal{M_\phi}$.
\begin{theorem}  \label{eq24}
Let $0<p<1$, $\phi \in L ^1(0, \infty )$ and $B_{\inf} > 0$. If $H_{\alpha, \beta, \phi}: L^p(\R, A_{\alpha, \beta})  \rightarrow L^p(\R, A_{\alpha, \beta})$ is a bounded operator, then  
$$
\left\|H_{\alpha, \beta, \phi} \right\|_{L^p (\R, A_{\alpha, \beta})\to L^p (\R, A_{\alpha, \beta})} \geq p^{\frac{1}{p}}B_{\inf}.$$
\end{theorem}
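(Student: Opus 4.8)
The statement is the sharpness counterpart of the sufficiency bound, so the plan is to produce an \emph{extremizing family} inside $L^p(\R,A_{\alpha,\beta})\cap\mathcal{M}_\phi$ for which every estimate in the proof of Theorem \ref{eq23} becomes an asymptotic equality, and then read off the lower bound on the operator quasi-norm from $\|H_{\alpha,\beta,\phi}\|\geq \|H_{\alpha,\beta,\phi}f_\varepsilon\|_{L^p(\R,A_{\alpha,\beta})}/\|f_\varepsilon\|_{L^p(\R,A_{\alpha,\beta})}$. The reason a single pointwise estimate of the type used in the Banach range $p>1$ (Theorem \ref{eq4} and the grand Lebesgue necessity) cannot work here is that both the constant $p^{1/p}$ and the weight $\phi(t)^p$ are genuine artefacts of the quasi-norm: since $H_{\alpha,\beta,\phi}f$ is \emph{linear} in $\phi$, no pointwise lower bound $H_{\alpha,\beta,\phi}f\geq c\,f$ can ever yield a $\phi^p$-weight. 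These quantities enter only through Lemma \ref{eq17}, whose equality case is attained precisely when the integrand $t\mapsto h_{\varepsilon,x}(t):=\frac{\phi(t)}{t}f_\varepsilon(\frac{x}{t})\frac{A_{\alpha,\beta}(x/t)}{A_{\alpha,\beta}(x)}$ is a non-increasing step function in $t$. The entire construction is therefore driven by the requirement that $f_\varepsilon$ lie in $\mathcal{M}_\phi$ (so that $h_{\varepsilon,x}$ is non-increasing and Lemma \ref{eq17} is available) and render $h_{\varepsilon,x}$ asymptotically constant on an interval $(0,\tau_\varepsilon(x))$.

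Concretely, I would take a non-negative family $f_\varepsilon$ of power-times-cutoff type, modelled on the functions of Theorems \ref{eq4} and \ref{eq13} (that is, $x^{-1/p-\varepsilon}A_{\alpha,\beta}(x)^{-1/p}$ times a characteristic function), but with the exponents and the cutoff tuned so that $t\mapsto h_{\varepsilon,x}(t)$ is non-increasing; this membership in $\mathcal{M}_\phi$ is exactly what licenses Lemma \ref{eq17}, and the normalization $\|f_\varepsilon\|_{L^p(\R,A_{\alpha,\beta})}$ reduces to a routine power integral as in Theorem \ref{eq4}. I would then bound $\|H_{\alpha,\beta,\phi}f_\varepsilon\|$ from below: writing $H_{\alpha,\beta,\phi}f_\varepsilon(x)=\int_0^\infty h_{\varepsilon,x}(t)\,dt$ and using Lemma \ref{eq17} in its equality regime, one replaces $(\int_0^\infty h_{\varepsilon,x})^p$ by $p\int_0^\infty h_{\varepsilon,x}(t)^p t^{p-1}\,dt$; Fubini's theorem and the substitution $u=x/t$ convert this into $p\int_0^\infty\phi(t)^p\big(\int_\R |f_\varepsilon(u)|^p (A_{\alpha,\beta}(u)/A_{\alpha,\beta}(tu))^{p-1}A_{\alpha,\beta}(u)\,du\big)\,dt$, which is precisely the quantity estimated in the proof of Theorem \ref{eq23}. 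To recover $B_{\inf}$ I would restrict the $u$-integration to a region on which $A_{\alpha,\beta}(u)/A_{\alpha,\beta}(tu)$ is within a factor $1+o(1)$ of its infimum, forcing $(A_{\alpha,\beta}(u)/A_{\alpha,\beta}(tu))^{p-1}$ up to $(\inf_{u}A_{\alpha,\beta}(u)/A_{\alpha,\beta}(tu))^{p-1}$ (the exponent being negative); dividing by $\|f_\varepsilon\|^p$, letting $\varepsilon\to 0$, and controlling the $t$-integral by Fatou's lemma then yields $\|H_{\alpha,\beta,\phi}\|^p\geq p\,B_{\inf}^p$, i.e. the claimed bound $\|H_{\alpha,\beta,\phi}\|\geq p^{1/p}B_{\inf}$.

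The hard part is the first step, and it is what genuinely distinguishes the quasi-Banach range from the cases $p>1$. There one only restricts domains of integration and pulls an extremal density ratio out of a linear expression; here one must instead make Lemma \ref{eq17} asymptotically \emph{sharp}, which forces $h_{\varepsilon,x}$ to be an approximate non-increasing step function in $t$ for a.e.\ $x$, \emph{while simultaneously} concentrating the mass of $f_\varepsilon$ where $A_{\alpha,\beta}(u)/A_{\alpha,\beta}(tu)$ attains its $u$-infimum. These two demands pull against each other, since a step-function integrand wants $f_\varepsilon$ spread across the scales $x/t$ whereas the localization wants it concentrated, and both are constrained by the exponential growth $A_{\alpha,\beta}(x)\sim e^{2\rho|x|}$, which prevents the formal extremizer from belonging to $L^p(\R,A_{\alpha,\beta})$ at all. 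Reconciling these requirements within $\mathcal{M}_\phi$, and justifying the interchange of limit and integration as $\varepsilon\to 0$, is where the substance of the proof lies; the remaining manipulations are the same Fubini-and-change-of-variables computations already used in Theorems \ref{eq23} and \ref{eq4}.
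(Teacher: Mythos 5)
Your proposal rests on a false premise and, by your own admission, leaves its central step unconstructed, so it does not prove the theorem. The premise is your claim that, because $H_{\alpha,\beta,\phi}f$ is linear in $\phi$, the weight $\phi(t)^p$ and the constant $p^{\frac1p}$ can enter only through Lemma \ref{eq17}, forcing an extremizing family in $\mathcal{M}_\phi$. For $0<p<1$ this is wrong: the integral Minkowski inequality \emph{reverses} on non-negative kernels,
\[
\left(\int_{\R}\left(\int_0^\infty F(x,t)\,dt\right)^{p}A_{\alpha,\beta}(x)\,dx\right)^{\frac1p}\;\geq\;\int_0^\infty\left(\int_{\R}F(x,t)^{p}\,A_{\alpha,\beta}(x)\,dx\right)^{\frac1p}dt,
\]
and this single inequality converts a lower bound for the $\phi$-linear quantity into one carrying $\phi^p$. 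That is exactly the paper's proof: it takes the \emph{one fixed} test function $f_0(x)=x^{-\frac1p-1}A_{\alpha,\beta}(x)^{-\frac1p}\chi_{(1,\infty)}(x)$, for which $\|f_0\|_{L^p(\R,A_{\alpha,\beta})}=p^{-\frac1p}$ --- this normalization, not Lemma \ref{eq17}, is the source of the factor $p^{\frac1p}$; it rewrites the operator via the substitution $t\mapsto x/t$ so that on $x>0$ one has $H_{\alpha,\beta,\phi}f_0(x)=\int_1^\infty\frac{\phi(x/t)}{t}\,t^{-\frac1p-1}\,\frac{A_{\alpha,\beta}(t)^{1-\frac1p}}{A_{\alpha,\beta}(x)}\,dt$; it applies the reverse Minkowski inequality in $t$ (which is what places $\phi(x/t)^p$ under the $x$-integral), substitutes $u=x/t$ in the inner integral, replaces the density ratio raised to the power $p-1$ by its extremal value, and uses $\int_1^\infty t^{-2}\,dt=1$ to get $\|H_{\alpha,\beta,\phi}f_0\|_{L^p(\R,A_{\alpha,\beta})}\geq B_{\inf}$. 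No extremizing family, no Fatou limit, and no membership in $\mathcal{M}_\phi$ is needed --- note the theorem's hypothesis is boundedness on all of $L^p(\R,A_{\alpha,\beta})$, so insisting that your test functions lie in $\mathcal{M}_\phi$ is a constraint you imposed on yourself, and it is precisely what makes your route intractable.

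Beyond the misdiagnosis, the step you yourself call the hard part is simply absent: you never exhibit $f_\varepsilon$, and the obstructions you list (the tension between a near-step-function profile of $t\mapsto h_{\varepsilon,x}(t)$ and concentration of $f_\varepsilon$, the exponential growth of $A_{\alpha,\beta}$ excluding the formal extremizer from $L^p$) are stated rather than overcome. There is a further structural difficulty you do not address: in $B_{\inf}$ the infimum $\inf_{u}A_{\alpha,\beta}(u)/A_{\alpha,\beta}(tu)$ is taken separately for each $t$, and its near-minimizing set moves with $t$, so a single concentrated $f_\varepsilon$ cannot sit within a $1+o(1)$ factor of the extremal ratio for all $t$ in the support of $\phi$ simultaneously; your localization step therefore has no uniform meaning under the $t$-integral. (A caveat relevant to any route here, including the paper's final step: since $p-1<0$, replacing the ratio by an infimum or supremum \emph{flips} inequality directions, so the $\inf$/$\sup$ bookkeeping must be checked with care.) As written, your proposal establishes only the trivial reduction $\|H_{\alpha,\beta,\phi}\|\geq\|H_{\alpha,\beta,\phi}f_\varepsilon\|/\|f_\varepsilon\|$; the claimed bound $p^{\frac1p}B_{\inf}$ is not reached, whereas the reverse Minkowski argument gets it in a few lines.
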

\begin{proof}
	Suppose that   $H_{\alpha, \beta, \phi}: L^p(\R, A_{\alpha, \beta})  \rightarrow L^p(\R, A_{\alpha, \beta})$ is a bounded operator. We consider the  function 
	$$
	f_0(x)= x^{-\frac{1}{p}-1} A_{\alpha, \beta}(x)^{-\frac{1}{p}}\chi_{(1, \infty )}(x).
	$$
	Then, we have 
	$$\|f_0\|_{L^p (\R, A_{\alpha, \beta})}=\left( \int_{\R} 	|f_0(x)|^p\; A_{\alpha, \beta}(x)dx\right)^{\frac{1}{p}}=\left( \int_{1}^\infty 	x^{-1-p}\; dx\right)^{\frac{1}{p}}=\frac{1}{ p^{\frac{1}{p}}}.$$
Also, using  the reverse Minkowski inequality, we get 
	\begin{align*} 
		\left\|H_{\alpha, \beta, \phi} f_0\right\|_{L^p(\R, A_{\alpha, \beta})}
		&=\left( \int_{\R} 	|H_{\alpha, \beta, \phi} f_0(x)|^p\; A_{\alpha, \beta}(x)dx\right)^{\frac{1}{p}}\\\nonumber
		&\geq \left( \int_{0}^\infty  	\left |\int_{0}^{\infty} \frac{\phi(\frac{x}{t})}{t} f_0 (t ) \frac{A_{\alpha, \beta}(t)}{A_{\alpha, \beta}(x)}\; dt \right|^p A_{\alpha, \beta}(x)dx\right)^{\frac{1}{p}}\\
		&= \left( \int_{0}^\infty  	\left ( \int_{1}^{\infty} \frac{\phi(\frac{x}{t})}{t} t^{-\frac{1}{p}-1}  \frac{A_{\alpha, \beta}(t)^{1-\frac{1}{p}}}{A_{\alpha, \beta}(x)}\; dt \right)^p A_{\alpha, \beta}(x)dx\right)^{\frac{1}{p}}\\
		&\geq  \int_{1}^\infty  \frac{1}{t^2}	\left (  \int_{0}^{\infty} \frac{\phi(\frac{x}{t})^p}{t }    \left( \frac{A_{\alpha, \beta}(t)}{A_{\alpha, \beta}(x)}\right) ^{p-1} dx  \right)^\frac{1}{p}   dt.
		\end{align*} 
Using the   change of variable  $x  \mapsto  u = \frac{x}{t}$ in the second integral,  we obtain  
		\begin{align*} 
		\left\|H_{\alpha, \beta, \phi} f_0\right\|_{L^p(\R, A_{\alpha, \beta})}	
		&\geq\int_{1}^\infty  \frac{1}{t^2}	\left ( \int_{0}^{\infty}  \phi(u)^p     \left( \frac{A_{\alpha, \beta}(t)}{A_{\alpha, \beta}(ut)}\right) ^{p-1} du  \right)^\frac{1}{p}  dt\\
	&\geq \left( \int_{1}^\infty  \frac{1}{t^2}	dt\right)\left ( \int_{0}^{\infty}  \phi(u)^p     \left(\inf_{t\in \R} \frac{A_{\alpha, \beta}(t)}{A_{\alpha, \beta}(ut)}\right) ^{p-1} du  \right)^\frac{1}{p}  \\
		&= \left (  \int_{0}^{\infty}  \phi(u)^p     \left(\inf_{t\in \R} \frac{A_{\alpha, \beta}(t)}{A_{\alpha, \beta}(ut)}\right) ^{p-1} du  \right)^\frac{1}{p}.
	\end{align*} 
	Thus,
	\begin{align*} 
		\left\|H_{\alpha, \beta, \phi} \right\|_{L^p (\R, A_{\alpha, \beta})\to L^p (\R, A_{\alpha, \beta})} \geq p^{\frac{1}{p}}B_{\inf}.
	\end{align*}
\end{proof}
From Theorems \ref{eq23} and \ref{eq24}, in the following corollary,  we obtain a  characterization for the boundedness of the  Hausdorff operator $H_{\alpha, \beta, \phi}: L^p(\R, A_{\alpha, \beta}) \rightarrow L^p(\R, A_{\alpha, \beta})$.
\begin{corollary}
	Let $0<p<1$ and 
	$$
	\sup_{u\in \R}\frac{A_{\alpha, \beta}(tu)}{A_{\alpha, \beta}(u)} \leq  D \inf_{u\in \R}\frac{A_{\alpha, \beta}(tu)}{A_{\alpha, \beta}(u)}, \quad t>0,
	$$
	for some positive constant $D .$ Then,  the operator $H_{\alpha, \beta, \phi}:L^p(\R, A_{\alpha, \beta})  \rightarrow L^p(\R, A_{\alpha, \beta})$ is  bounded    if and only if 
	$0< B_{\sup} <\infty.$  Moreover,  the
	following estimates hold
	$$
	\frac{p^{\frac{1}{p}}}{D^{\frac{1}{p}-1}} B_{\sup} \leq \left\|H_{\alpha, \beta, \phi} \right\|_{L^p (\R, A_{\alpha, \beta})\to L^p (\R, A_{\alpha, \beta})} \leq  p^{\frac{1}{p}} B_{\sup}.
	$$
\end{corollary}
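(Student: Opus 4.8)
The plan is to read the corollary directly off Theorems \ref{eq23} and \ref{eq24}, using the dilation hypothesis only to show that the two constants $B_{\sup}$ and $B_{\inf}$ are comparable. Concretely, Theorem \ref{eq23} supplies the upper estimate together with the sufficiency of the condition, Theorem \ref{eq24} supplies the matching lower estimate, and the hypothesis is what bridges the gap between the constant $B_{\inf}$ appearing in the lower bound and the constant $B_{\sup}$ appearing in the upper bound.

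First I would settle the right-hand inequality and the ``if'' direction at once. Assuming $0<B_{\sup}<\infty$, Theorem \ref{eq23} applies to every $f\in L^p(\R,A_{\alpha,\beta})\cap\mathcal{M}_\phi$ and gives
\[
\|H_{\alpha,\beta,\phi}f\|_{L^p(\R,A_{\alpha,\beta})}\le p^{\frac1p}B_{\sup}\,\|f\|_{L^p(\R,A_{\alpha,\beta})},
\]
so $H_{\alpha,\beta,\phi}$ is bounded with $\|H_{\alpha,\beta,\phi}\|\le p^{\frac1p}B_{\sup}$.

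Next I would extract the comparison of constants. Setting $R_t(u)=A_{\alpha,\beta}(u)/A_{\alpha,\beta}(tu)$, I note that taking reciprocals inside the supremum and infimum turns the assumption into the equivalent statement $\sup_u R_t(u)\le D\,\inf_u R_t(u)$, since reciprocation interchanges $\sup$ and $\inf$ while leaving the factor $D$ intact. Because $0<p<1$, the exponent $p-1$ is negative and $x\mapsto x^{p-1}$ is decreasing on $(0,\infty)$; hence raising both the trivial bound $\inf_u R_t(u)\le\sup_u R_t(u)$ and the displayed inequality to the power $p-1$ reverses them. Multiplying by $\phi(t)^p$, integrating over $(0,\infty)$, and taking $p$-th roots then yields
\[
B_{\sup}\le B_{\inf}\le D^{\frac1p-1}B_{\sup}.
\]
With this in hand, Theorem \ref{eq24} gives $\|H_{\alpha,\beta,\phi}\|\ge p^{\frac1p}B_{\inf}\ge \tfrac{p^{1/p}}{D^{1/p-1}}B_{\sup}$, which is the left-hand inequality.

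Finally, the characterization follows from the same comparison: the chain $B_{\sup}\le B_{\inf}\le D^{\frac1p-1}B_{\sup}$ shows that $B_{\sup}$ and $B_{\inf}$ are finite together and positive together. Thus if $H_{\alpha,\beta,\phi}$ is bounded, Theorem \ref{eq24} forces $B_{\inf}<\infty$, whence $0<B_{\sup}<\infty$, completing the ``only if'' direction. I expect the only real obstacle to be the sign bookkeeping: the hypothesis is phrased through the dilation ratio $A_{\alpha,\beta}(tu)/A_{\alpha,\beta}(u)$ whereas $B_{\sup}$ and $B_{\inf}$ are built from the reciprocal ratio, and one must reverse inequalities once when passing to reciprocals and once more when raising to the negative power $p-1$; tracking these two reversals correctly is precisely what produces the exponent $\tfrac1p-1$ on $D$.
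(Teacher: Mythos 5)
Your proposal is correct and follows exactly the route the paper intends: the corollary is stated there as an immediate consequence of Theorems \ref{eq23} and \ref{eq24}, and your comparison chain $B_{\sup}\le B_{\inf}\le D^{\frac{1}{p}-1}B_{\sup}$ --- obtained by passing to reciprocals in the dilation hypothesis and then raising to the negative power $p-1$ --- is precisely the bridge the paper leaves implicit. Your sign bookkeeping (two inequality reversals producing the exponent $\frac{1}{p}-1$ on $D$) is handled correctly, so nothing is missing.
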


\section*{Acknowledgments}
The first author gratefully acknowledges the support provided by IIT Guwahati, Government of India. 
The second author   is deeply indebted to Prof. Nir Lev for several fruitful discussions and generous comments. The authors wish to thank the anonymous referee for valuable comments and suggestions that helped to improve the quality of the paper.

\section*{Data availability statements}	The authors confirm that the data supporting the findings of this study are available within the article and its supplementary materials.

\section*{Declarations} {\bf Conflict of interest} The authors declare that there is no conflict of interest regarding the publication of this article.


\begin{thebibliography}{10}
		



\bibitem{and15}
N.B. Andersen, {\it Real Paley--Wiener theorems and Roe's theorem associated with the Opdam--Cherednik transform}, J  Math  Anal Appl.
 427(1):47--59 (2015).

\bibitem{andersen} K. Andersen and E. Sawyer,  {\it Weighted norm inequalities for the Riemann–Liouville and Weyl fractional
	integral operators}, Trans Amer Math Soc.
308:547--558 (1988).

\bibitem{ank12}J.-P. Anker, F. Ayadi and M. Sifi, {\it Opdam's hypergeometric functions: product formula and convolution structure in dimension 1}, Adv  Pure Appl  Math. 3(1):11--44 (2012).

























\bibitem{weight}  
R. Bandaliyev and P. Górka, {\it Hausdorff operator in Lebesgue spaces},  Math Inequal Appl. 22(2):657--676 (2019).

\bibitem{ban20}
R.A. Bandaliyev and K.H. Safarova, {\it On boundedness of multidimensional Hausdorff operator in weighted Lebesgue spaces}, Tbilisi Math J. 13(1):39--45 (2020).

\bibitem{ban21}
R.A. Bandaliyev and K.H. Safarova, {\it On two-weight inequalities for Hausdorff operators of special kind in Lebesgue spaces}, Hacet J Math Stat. 50(5):1334--1346 (2021).
















\bibitem{chen}  
J. Chen, D. Fan and S. Wang,  {\it Hausdorff operators on Euclidean spaces}, Appl Math J Chinese Univ Ser B  28(4):548--564 (2013).

\bibitem{chen2014}
J. Chen and X. Zhu, {\it Boundedness of multidimensional Hausdorff operators on $H^1({\mathbb R}^n)$}, J Math Anal Appl. 409(1):428--434 (2014). 


 






\bibitem{christ}   M. Christ and  L. Grafakos, {\it  The best constants for two non-convolution inequalities},    Proc  Amer Math  Soc. 123:1687--1693 (1995).


 



\bibitem{daher2021} R. Daher and F. Saadi,    {\it The Dunkl--Hausdorff operator is bounded on the real Hardy space $H_\alpha^1(\R)$},    Integral Trans Spec Funct. 30(11):882--892 (2019).

\bibitem{daher2020} R. Daher and F. Saadi,    {\it The Dunkl--Hausdorff operators and the Dunkl continuous wavelets transform},   J Pseudo-Differ Oper Appl.  11:1821--1831 (2020).


\bibitem{die00}
J.F. van Diejen and L. Vinet, {\it Calogero--Moser--Sutherland Models}, CRM Series in Mathematical Physics, Springer, New York (2000).







\bibitem{dun92} C.F. Dunkl, {\it Hankel transforms associated to finite reflection groups}, Contemp  Math. 138:123--138 (1992).

\bibitem{fior} A. Fiorenza, B. Gupta and  P. Jain, {\it The maximal theorem in weighted grand Lebesgue spaces}, Studia Math. 188:123--133 (2008).



\bibitem{graffa}    Z. Fu, L. Grafakos, S. Lu and F. Zhao,  {\it Sharp bounds for $m$-linear Hardy and Hilbert operators},  
Houston J  Math. 38(1):225--243 (2012).

\bibitem{gao} G.   Gao, X. Wu  and W. Guo, {\it Some results for Hausdorff operators},   Math Inequal Appl. 18:155--168 (2015).




\bibitem{geor} C. Georgakis, {\it  The Hausdorff mean of a Fourier--Stieltjes transform},   Proc Amer Math Soc. 116:465--471 (1992).

   




















\bibitem{hardy} G.H. Hardy, {\it Divergent Series},  Oxford University Press, Oxford (1949).


\bibitem{haus}  F. Hausdorff, {\it Summation methoden und Momentfolgen},   Math  Z. 9:74--109 (1921).


\bibitem{hec91} G.J. Heckman, {\it An elementary approach to the hypergeometric shift operators of Opdam}, Invent  Math.
103:341--350 (1991).


\bibitem{Heckman} G.J. Heckman and E.M. Opdam,  {\it  Root systems and hypergeometric functions I},
Comp Math. 64:329--352 (1987).

\bibitem{hik96}
K. Hikami, {\it  Dunkl operators formalism for quantum many-body problems associated with classical root systems}, J Phys Soc Japan
65:394--401 (1996).




\bibitem{c} T. Iwaniec, P. Koskela and J. Onninen, {\it Mappings of finite distortion: Monotonicity
	and continuity}, Invent Math. 144:507--531 (2001).
 
 \bibitem{iwan}  T. Iwaniec   and  C. Sbordone, {\it On the integrability of the Jacobian under minimal hypothesis}, Arch Ration Mech Anal. 119:129--143 (1992).


\bibitem{jain21}
S. Jain, A. Fiorenza and P. Jain, {\it Boundedness of the Dunkl--Hausdorff operator in Lebesgue spaces}, Rocky Mountain J Math. 51(6):2031--2044 (2021). 





\bibitem{jahn16}  T.R.  Johansen, {\it  Uncertainty principles for the Heckman--Opdam transform},  Bull  Sci  Math. 140:687--717 (2016).




\bibitem{lanker} A.K. Lerner and E. Liflyand, {\it  Multidimensional Hausdorff operators on the real Hardy spaces},   J Aust Math Soc. 83:79--86 (2007).














\bibitem{lif} 
E. Liflyand, {\it Hausdorff operators on Hardy spaces},  Eurasian Math J. 4:101--141 (2013).

\bibitem{liflya19}
E. Liflyand, {\it Hardy type inequalities in the category of Hausdorff operators}, Modern methods in operator theory and harmonic analysis, Springer Proc Math Stat. 291, 89--91, Springer, Cham (2019).

\bibitem{lifly09}
E. Liflyand and A. Miyachi, {\it Boundedness of the Hausdorff operators in $H^p$ spaces, $0<p<1$}, Studia Math. 194(3):279--292 (2009).

\bibitem{lifly2019}
E. Liflyand and A. Miyachi, {\it Boundedness of multidimensional Hausdorff operators in $H^p$ spaces, $0<p<1$}, Trans Amer Math Soc. 371:4793--4814 (2019).








\bibitem{lif2}    E. Liflyand and F.  Móricz,  {\it The Hausdorff operator is bounded on the real Hardy space $H^1(\R)$},  Proc Amer Math Soc.
128:1391--1396 (2000).



\bibitem{lif6}  E. Liflyand and F.  Móricz,   {\it The multi-parameter Hausdorff operator is bounded on the product Hardy space $H^{11}(\R \times \R)$},   Analysis 21:107--118 (2001).




















\bibitem{miyachi} A. Miyachi,
{\it Boundedness of the Cesàro operator in Hardy space},  
J Fourier Anal  Appl. 10:83--92  (2004).

\bibitem{shyam1} S.S.  Mondal and  A. Poria, 
{\it Weighted norm inequalities for the Opdam--Cherednik transform}, arXiv:2107.04090  (2021).






\bibitem{mort}  F. Móricz, {\it  Multivariate Hausdorff operators on the spaces $H^1
	(\R^n)$ and BMO$(\R^n)$},   Anal Math. 31:31--41 (2005).










\bibitem{opd95} 
E.M. Opdam, {\it Harmonic analysis for certain representations of graded Hecke algebras}, Acta Math. 175(1):75--121 (1995).

\bibitem{opd00} E.M. Opdam, {\it Lecture notes on Dunkl operators for real and complex reflection groups}, In: Mem Math Soc Japan 8, pp. 63--90, Tokyo  (2000).





\bibitem{por21}
A. Poria, {\it Uncertainty principles for the Opdam--Cherednik transform on modulation spaces}, Integral Trans Spec Funct. 32(3):191--206 (2021).







\bibitem{fuu}  W. Qingyan and F.  Zunwei, {\it Boundedness of Hausdorff operators on Hardy spaces in the Heisenberg group},   Banach J Math Anal. 12(4):909--934 (2018). 








\bibitem{raun3} J.  Ruan,  D. Fan and Q. Wu, {\it Weighted Herz space estimates for Hausdorff operators on the Heisenberg group},   Banach J
Math Anal. 11:513--535 (2017).




\bibitem{sch08} B. Schapira, {\it Contributions to the hypergeometric function theory of Heckman and Opdam: sharp estimates, Schwartz space, heat kernel}, Geom Funct Anal. 18:222--250 (2008).


\bibitem{trim}  K. Trimèche, {\it Harmonic analysis  associated with the Cherednik operators and the Heckman--Opdam theory}, Adv  Pure Appl Math. 2:23--46 (2011).




\bibitem{weisez}  F. Weisz,  {\it The boundedness of the Hausdorff operator on multi-dimensional Hardy spaces},   Analysis 24:183--195 (2004).













 








 
  
  

 
 
 






\end{thebibliography}
\end{document}